\newtheorem{thm}{Theorem}[section]
\newtheorem{lem}[thm]{Lemma}
\newtheorem{cor}[thm]{Corollary}
\theoremstyle{definition}
\newtheorem{dfn}[thm]{Definition}
\newtheorem{example}[thm]{Example}
\newcommand{\uacs}{uniquely arcwise connected space}
\newcommand{\R}{\mathbb{R}}
\newcommand{\Z}{\mathbb{Z}}
\newcommand{\End}{\mathrm{End}}
\newcommand{\ar}{\mathrm{Ar}}
\newcommand{\orb}{\mathrm{O}}
\newcommand{\ord}{\mathrm{ord}}
\newcommand{\U}{\mathcal{GD}}
\newcommand{\go}{\mathrm{GO}}
\newcommand{\ol}{\overline}
\newcommand{\0}{\varnothing}
\newcommand{\sm}{\setminus}
\newcommand{\bd}{\mathrm{Bd}}
\newcommand{\ch}{\mathrm{Ch}}
\newcommand{\om}{\omega}
\newcommand{\C}{\mbox{$\mathbb{C}$}}
\newcommand{\uc}{\mathbb{S}^1}
\def\Cc{\mathcal{C}}
\def\Rc{\mathcal{R}}
\begin{document}

\date{August 26, 2013; revised May 23, 2014}
\title[Pointwise-recurrent maps of generalized dendrites]
{Pointwise-recurrent maps on uniquely arcwise connected locally
arcwise connected spaces}

\author[A.~M.~Blokh]{Alexander~M.~Blokh}

\thanks{The author was partially
supported by NSF grant DMS--1201450}

\address[Alexander~M.~Blokh]
{Department of Mathematics\\ University of Alabama at Birmingham\\
Birmingham, AL 35294-1170}

\email[Alexander~M.~Blokh]{ablokh@math.uab.edu}

\subjclass[2010]{Primary 37B45, 37E25, 54H20; Secondary 37C25,
37E05}

\keywords{Periodic points; recurrent points; pointwise-recurrent
maps; uniquely arcwise connected space; locally arcwise connected
space}


\begin{abstract}
We prove that self-mappings of uniquely arcwise connected locally
arcwise connected spaces are pointwise-recurrent if and only if all
their cutpoints are periodic while all endpoints are either periodic
or belong to what we call ``topological weak adding machines''. We
also introduce the notion of a \emph{ray complete} uniquely arcwise
connected locally arcwise connected space and show that for them the
above ``topological weak adding machines'' coincide with classical
adding machines (e.g., this holds if the entire space is compact).
\end{abstract}

\maketitle

\section{Introduction and the main results}\label{s:intro}

There are two main types of results in interval dynamics. First,
these are facts about periodic points (for a map $f$, a point $x$ is
called \emph{($f$-)periodic (of period $n>0$)} if $f^n(x)=x$ and
$f^t(x)\ne x$ for all $0<t<n$). The first step here was the
celebrated \emph{Sharkovsky Theorem} \cite{sha64} on the coexistence
among periods of periodic points of an interval map. The Sharkovsky
Theorem started \emph{combinatorial one-dimensional dynamics} (see a
nice book \cite{alm00} with an extensive list of references). One
direction in which the field has developed is the study of the
coexistence among periods of periodic points for self-mappings of
``graphs'', i.e. \emph{one-dimensional compact branched manifolds}.

Results of the second type deal with all limit sets rather than only
periodic orbits. This direction has also been initiated by
Sharkovsky, who studied maps of the interval from this perspective
in a number of papers (see, e.g., \cite{sha64a, sha66, sha66a,
sha67, sha68}); the scope of our work does not allow us to go into a
detailed description of this series of articles which, in our view,
laid the foundation of the \emph{one-dimensional topological
dynamics}.

It is natural to see if bounds of one-dimensional topological
dynamics can be pushed further to cover other (one-dimensional)
spaces. As was mentioned, in some works one-di\-men\-sio\-nal
to\-po\-lo\-gi\-cal dy\-na\-mics is stu\-died for ``graphs''(see,
e.g., \cite{alm00, blo80xx}). In this paper we consider a specific
dynamical problem for one-dimensional spaces which can be viewed as
more complicated than ``graphs''.

All topological spaces considered in this paper are assumed to be
Hausdorff. By an \emph{arc} we mean a homeomorphic image of $[0,
1]$; by a \emph{Peano subset} we mean a continuous image of $[0,
1]$. A very good reference here is Chapter 3 of \cite{hy88}.


\begin{dfn}[Uniquely arcwise connected spaces]\label{d:uac}
If for any points $x, y\in X$ there exists an arc $I\subset X$ with
endpoints $x, y$, then $X$ is called \emph{arcwise connected}; if
$I$ is \emph{unique}, then $X$ is called \emph{uniquely arcwise
connected}.
\end{dfn}

To give examples we need the following definition.

\begin{dfn}[Endpoints, cutpoints, branchpoints]\label{d:dendr}
A point $x\in X$ is said to be of \emph{order $\ord_X(x)$ in $X$} if
there are $\ord_X(x)$ components of $X\sm \{x\}$. A point $x\in X$
is called an \emph{endpoint} of $X$ if $\ord_X(x)=1$, a
\emph{cutpoint} of $X$ if $\ord_X(x)>1$, and a \emph{branchpoint} of
$X$ if $\ord_X(x)>2$. 
\end{dfn}

Dendrites and trees are known uniquely arcwise connected spaces.

\begin{dfn}[Dendrites and trees]\label{d:uacex}
A \emph{dendrite} is a non-de\-ge\-ne\-ra\-ted lo\-cal\-ly
con\-nected continuum containing no Jordan curves. A \emph{tree} is
a dendrite with finitely many branchpoints.
\end{dfn}

A lot of arcwise-connected spaces are neither trees nor dendrites.

\begin{dfn}[Locally arcwise connected spaces]\label{d:rtree}
A topological space $X$ is \emph{locally arcwise connected} if any
point has a basis of arcwise connected open sets.
\end{dfn}


We study uniquely arcwise connected locally arcwise connected
topological Hausdorff spaces. At the suggestion of J. Mayer and L.
Oversteegen we call such spaces \emph{generalized dendrites} and
denote the family of all such spaces by $\U$ (it is easy to see that
dendrites belong to $\U$). We rely upon various properties of
uniquely arcwise connected spaces and generalized dendrites which we
now list together with useful notation. Despite their sometimes
complicated structure, uniquely arcwise con\-nec\-ted spa\-ces
allow, by their nature, for nice notation of their subarcs.

\begin{dfn}[Arcs and notation for them]\label{d:subarc}
Let $X$ be uniquely arcwise connected. Then for any points $a\ne
b\in X$ a unique closed arc in $X$ with endpoints $a$ and $b$ is
denoted $[a, b]$; the notation $(a, b), (a, b]$ and $[a, b)$ is
analogous to similar notation in the interval case. Moreover, a
homeomorphism $\alpha:[0, 1]\to [a, b]$ with $\alpha(0)=a,
\alpha(1)=b$ induces the order on $[a, b]$. 
\end{dfn}

If $K\subset X$ is a Peano subset and $a, b\in K$ then $[a,
b]\subset K$ (e.g., if $c, d\in [a, b]$ then $[c, d]\subset [a,
b]$). Thus, any Peano subset of $X$ is uniquely arcwise connected.

\begin{dfn}[Path component]\label{d:arc}
Let $X$ be a topological space. Then a maximal by inclusion arcwise
connected subset of $X$ is called an \emph{path component} of $X$.
Thus, for a point $x\in X$, the path component of $X$ containing $x$
is the union of all arcs in $X$ containing $x$. Also, if $x, a, b\in
K\subset X$ then we say that \emph{$x$ separates $a$ and $b$ in $K$}
if $a$ and $b$ belong to distinct path components of $K\sm \{x\}$.
\end{dfn}

If $X$ is uniquely arcwise connected and $a, b\in X$, then $u$
separates $a$ and $b$ in $X$ if and only if $u\in (a, b)$. Hence
$[a, x]\cup [x, b]=[a, b]$ if and only if $x\in [a, b]$, and if
$y\ne x$ then either $x$ separates $a$ and $y$, or $x$ separates $b$
and $y$. Later in the paper we will use the following simple fact:
if $a, b, c, d\in X$ with $b\in (a, c)$ and $c\in (b, d)$ then $b,
c\in [a, d]$. 

\begin{dfn}[Arc cutpoints]\label{d:arccutpts}
For a point $x\in X$, denote by $\ord_{X, arc}(x)$ the number of
path components of $X\sm \{x\}$. Depending on $\ord_{X, arc}(x)$, we
call $x$ an \emph{arc endpoint of $X$} (if $\ord_{X, arc}(x)=1$), an
\emph{arc cutpoint of $X$} (if $\ord_{X, arc}(x)>1$), and an
\emph{arc branchpoint of $X$} (if $\ord_{X, arc}(x)>2$).
\end{dfn}

Clearly, $x$ is an arc cutpoint if and only if there exist $a, b$
such that $x\in (a, b)$. Cutpoints are arc cutpoints, but the
opposite is not always true.

\begin{example}[``Warsaw Circle'']\label{e:warsaw}
The ``Warsaw Circle'' $W_c$ is defined as follows: take the graph of
the function $\sin(\frac{1}{x}), 0<x\le \frac{1}{2\pi}$, add to it a
vertical segment from $(0, -1)$ to $(0, 1)$, and then complete the
thus constructed continuum $C$ with an arc $I$ connecting $(0, -1)$
with $(\frac{1}{2\pi}, 0)$ and avoiding $C$. Then no point of $W_c$
is a cutpoint of $W_c$, but any point of $W_c\sm \{(0, 1)\}$ is its
arc cutpoint. Observe that $W_c$ is uniquely arcwise connected, but
not locally arcwise connected. For generalized dendrites the
situation is different.
\end{example}


\begin{lem}\label{l:cutarcut} If $X$ is a generalized dendrite
then all components of an open set $U\subset X$ are open and are
generalized dendrites; thus, path components of $X\sm \{x\}$ are
components of $X\sm\{x\}$, are open and locally arcwise connected
(so that $\ord_{X, arc}(x)=\ord_{X}(x)$). In particular, the sets of
arc endpoints, arc cutpoints, and arc branchpoints of $X$ coincide
with the sets of endpoints, cutpoints and branchpoints of $X$,
respectively. Moreover, if $A$ is a component of $X\sm \{x\}$ then
$\ol{A}=A\cup \{x\}$.
\end{lem}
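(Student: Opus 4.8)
First I would show that if $X$ is a generalized dendrite, then every open subset $U \subset X$ has open components. The key input is that $X$ is locally arcwise connected: each point $y \in U$ has a neighborhood basis of arcwise connected open sets, so it has an arcwise connected open neighborhood $V$ with $V \subset U$. Such a $V$ is arcwise connected, hence connected, hence contained in the component of $U$ containing $y$; so every component of $U$ contains an open neighborhood of each of its points and is therefore open. Since components are closed in $U$ and $U$ is open in $X$, each component is locally closed; more importantly, each component is itself an open subspace of $X$, hence inherits local arcwise connectedness, and it inherits unique arcwise connectedness because it is a subset of the uniquely arcwise connected space $X$ (any arc between two of its points lies in $X$ and is unique there, and lies in the component by path-connectedness of the arc). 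Thus each component is again a generalized dendrite. I would also note that an open connected subset of a locally arcwise connected space is arcwise connected (a standard clopen-in-the-relative-topology argument: the set of points joinable by an arc to a fixed basepoint is open by local arcwise connectedness, and its complement is open by the same reasoning, using that two arcs can be concatenated and trimmed inside a uniquely arcwise connected space).

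Next I would apply this to $U = X \setminus \{x\}$. By the above, the components of $X \setminus \{x\}$ are open, connected, locally arcwise connected, hence arcwise connected, so they are exactly the path components; this gives $\ord_{X,arc}(x) = \ord_X(x)$ directly from Definitions \ref{d:dendr} and \ref{d:arccutpts}. Consequently the three dichotomies (endpoint/arc endpoint, cutpoint/arc cutpoint, branchpoint/arc branchpoint) coincide, since they are defined by the identical numerical conditions on $\ord_X(x)$ and $\ord_{X,arc}(x)$ respectively.

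Finally, for the closure statement: let $A$ be a component of $X \setminus \{x\}$. Since $A$ is open, $X \setminus (A \cup \{x\})$ is a union of the other (open) components, hence open, so $A \cup \{x\}$ is closed, giving $\overline{A} \subseteq A \cup \{x\}$. For the reverse inclusion I must show $x \in \overline{A}$, i.e. $A$ is not already closed. Pick any $a \in A$; then $x \neq a$ and the arc $[x, a] \subset X$ exists. Every point of $(x, a]$ lies in $X \setminus \{x\}$ and is connected to $a$ by the subarc $[\cdot, a] \subset (x,a]$, so $(x,a] \subset A$; thus $x$ is a limit of the half-open arc $(x,a] \subset A$ (using that the parametrizing homeomorphism $\alpha \colon [0,1] \to [x,a]$ is continuous at $0$), so $x \in \overline{A}$, giving $\overline{A} = A \cup \{x\}$. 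The main obstacle I anticipate is the careful verification that an open connected subset of a uniquely arcwise connected, locally arcwise connected Hausdorff space is arcwise connected — the clopen argument is routine in outline but requires one to check that arc-concatenation inside such a space can always be trimmed back to a genuine (injective) arc, which is exactly where unique arcwise connectedness and the "$[a,x] \cup [x,b] = [a,b]$ iff $x \in [a,b]$" observation recorded after Definition \ref{d:arc} get used.
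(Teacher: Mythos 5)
Your argument is correct and proves every claim of the lemma, and its skeleton is the same as the paper's: local arcwise connectedness makes components of open sets open, components of $X\sm\{x\}$ are identified with path components, hence $\ord_{X, arc}(x)=\ord_X(x)$ and the coincidence of (arc) endpoints, cutpoints and branchpoints, and $\ol{A}=A\cup\{x\}$ because the complement of $A\cup\{x\}$ is a union of open components while $x\in\ol{A}$. The one genuinely different step is how you show a component $C$ of an open set is arcwise connected. The paper first proves that path components of $X\sm\{z\}$ are open and then argues: if $x,y\in C$ but some $z\in[x,y]$ were outside $C$, then $x$ and $y$ would lie in distinct open path components of $X\sm\{z\}$, disconnecting $C$; so $[x,y]\subset C$ directly. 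You instead run the classical clopen argument inside $C$ (points joinable to a basepoint by an arc in $C$ form a set that is open with open complement), using concatenate-and-trim via the remark that a Peano subset containing $a,b$ contains $[a,b]$, after which uniqueness of arcs in $X$ forces $[x,y]\subset C$. Both routes work: the paper's separation argument is a bit shorter because it recycles the already-proved openness of path components of a point's complement, while yours is self-contained and is the textbook proof that open connected subsets of locally arcwise connected spaces are arcwise connected. One caution: your earlier parenthetical, that an arc between two points of a component lies in the component ``by path-connectedness of the arc,'' is not a justification by itself, since a priori the $X$-arc could leave $U$; it is exactly your clopen argument plus uniqueness of arcs that repairs this, so that remark should come after it, not before. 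On the plus side, you spell out why $x\in\ol{A}$ (the half-open arc $(x,a]$ lies in $A$ and accumulates on $x$ via the parametrization), a detail the paper asserts without proof.
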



\begin{proof}
We claim that, for $x\in X$ and $A$, a path component $A$ of $X\sm
\{x\}$ is open. Take a point $a\in A$. Since $X$ is locally arcwise
connected, we can find a neighborhood $U$ of $a$ in $X$ such that
$U$ is arcwise connected and $x\notin U$. This implies that
$U\subset A$ and shows that $A$ is open and locally arcwise
connected. The set $X\sm \{x\}$ decomposes into pairwise disjoint
path components of $X\sm \{x\}$ each of which is open, connected and
locally arcwise connected. Hence path components of $X\sm \{x\}$ are
components of $X\sm \{x\}$ with desired properties. Let $U\subset X$
be open and let $C\subset U$ be a component of $U$. Every point
$x\in C$ comes into $C$ with a small arcwise connected neighborhood.
Thus, $C$ is locally arcwise connected and open. If $x, y\in C$ but
there exists $z\in [x, y]\sm C$ then $x, y$ belong to distinct path
components of $X\sm \{z\}$ and $C$ is not connected, a
contradiction. Finally, if $A$ is a component of $X\sm \{x\}$ then
the complement of $A\cup \{x\}$ is open as the union of all other
path components of $X\sm \{x\}$. Hence $A\cup \{x\}$ is closed;
since $x\in \ol{A}$ we get the desired.
\end{proof}

Few dynamical results were obtained for continuous maps on dendrites
(see, e.g. \cite{mt89, aeo07}). So-called \emph{$\R$-trees} give
another example of generalized dendrites; however results on
$\R$-trees are either not dynamical (see, e.g., \cite{nik89},
\cite{mo90} or \cite{mno92}) or arise in the study of groups of
isometries of hyperbolic space \cite{thu88, ms84, ms88, bes88} and
do not deal with the dynamics on $\R$-trees. The author is not aware
of any dynamical results for generalized dendrites. However their
one-dimensional nature allows one to consider for them some
classical problems of topological dynamics. To describe a particular
problem which we tackle in this paper, we need more definitions.
Recall that a point $z$ is called a  \emph{limit} point of a
sequence $x_0, x_1, \dots$ if in any neighborhood of $z$ there
exists a point $x_i\ne z$.

\begin{dfn}[Recurrent points; pointwise-recurrent maps]\label{d:limrec}
Let $f:X\to X$. Given a point $x\in X$, the sequence $(x, f(x),
\dots)=\orb_f(x)=\orb(x)$ is called the \emph{($f$-)orbit} of $x$.
The set $\om_f(x)=\om(x)$ of all limit points of $\orb(x)$ is said
to be the \emph{($\om$-)limit set of $x$}. A point which belongs to
its own limit set is said to be \emph{recurrent} (in other words, a
point $x$ such that $f(x)$ visits any neighborhood of $x$ is said to
be \emph{recurrent}). A map such that all points are recurrent is
called \emph{pointwise-recurrent}.
\end{dfn}

An important and nice property of recurrent points is the following
theorem due to Gottschalk, Erd\"os and Stone.

\begin{thm}[\cite{es45, got44}]\label{t:recpow} If $g$ is a continuous map of a
Hausdorff topological space then, for any positive integer $n$, the
set of recurrent points of $g$ and the set of recurrent points of
$g^n$ coincide.
\end{thm}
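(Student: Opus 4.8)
The plan is to prove the two inclusions separately. The inclusion \emph{recurrent for $g^n$ implies recurrent for $g$} is immediate: the $g^n$-orbit of a point $x$ is the subsequence $x,g^n(x),g^{2n}(x),\dots$ of its $g$-orbit, so if $g^{kn}(x)$ lies in a prescribed neighborhood of $x$ for arbitrarily large $k$, then $g^m(x)$ lies in that neighborhood for arbitrarily large $m$ (take $m=kn$); hence $x\in\om_g(x)$.

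For the converse I would fix a point $x$ with $x\in\om_g(x)$ and argue by contradiction, supposing $x\notin\om_{g^n}(x)$. First dispose of the trivial case: if $g^{kn}(x)=x$ for some $k\ge 1$, then $x$ is periodic, hence recurrent, for $g^n$; so we may assume $g^{kn}(x)\ne x$ for all $k\ge 1$. Next, using that the space is Hausdorff and hence $T_1$ together with the fact that all but finitely many of the points $g^{kn}(x)$ already avoid some fixed neighborhood of $x$, I would shrink that neighborhood to an open set $U\ni x$ with $g^{kn}(x)\notin U$ for every $k\ge 1$.

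The core of the argument is then a telescoping construction. I would build recursively a nested sequence of open neighborhoods $U=W_0\supseteq W_1\supseteq W_2\supseteq\cdots$ of $x$ together with positive integers $m_1,m_2,\dots$: given $W_{j-1}$, recurrence of $x$ for $g$ yields $m_j\ge 1$ with $g^{m_j}(x)\in W_{j-1}$, and continuity of $g^{m_j}$ yields an open set $W_j$ with $x\in W_j\subseteq W_{j-1}$ and $g^{m_j}(W_j)\subseteq W_{j-1}$. A downward induction on $i$ then shows $g^{\,m_i+m_{i+1}+\cdots+m_j}(x)\in W_{i-1}\subseteq U$ for all $1\le i\le j$: the base case $i=j$ is the choice of $m_j$, and each further step merely applies $g^{m_i}$ to the inclusion already obtained. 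Writing $S_0=0$ and $S_k=m_1+\cdots+m_k$, this says that for every pair $0\le a<b$ the positive integer $S_b-S_a=m_{a+1}+\cdots+m_b$ satisfies $g^{S_b-S_a}(x)\in U$, so $n$ cannot divide $S_b-S_a$. Hence $S_0,S_1,S_2,\dots$ are pairwise incongruent modulo $n$; but $S_0,S_1,\dots,S_n$ are then $n+1$ integers pairwise distinct modulo $n$, which is impossible. This contradiction establishes $x\in\om_{g^n}(x)$.

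I expect the telescoping construction to be the only real obstacle. A single return time of $x$ under $g$ carries no information about its residue modulo $n$, so the proof must be able to \emph{concatenate} returns, and the neighborhoods $W_j$ are precisely what makes this legitimate: since $g^{m_i}(W_i)\subseteq W_{i-1}$, one may keep applying further iterates to points already inside $W_i$ and still land in $U=W_0$, which is what turns arbitrarily long sums $m_i+\cdots+m_j$ into ``return times to $U$''. Once that is in hand, the contradiction is a one-line pigeonhole argument modulo $n$, and everything else --- the two easy inclusions, the periodic case, and the $T_1$ shrinking of $U$ --- is routine. (One could instead first reduce to $n$ prime, since the statement for $g$ and for $g^a$ yields it for $g^{ab}$, but the construction above handles arbitrary $n$ directly.)
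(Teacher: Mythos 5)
Your argument is correct. Note, however, that the paper does not prove this statement at all: Theorem~\ref{t:recpow} is quoted from \cite{es45, got44} and used as a black box, so there is no internal proof to compare with. What you have written is the classical Gottschalk--Erd\H{o}s--Stone argument, and every step checks out: the inclusion ``recurrent for $g^n$ $\Rightarrow$ recurrent for $g$'' is indeed trivial; the periodic case and the shrinking of the neighborhood are handled properly; the telescoping construction $g^{m_j}(W_j)\subseteq W_{j-1}$ with the downward induction gives $g^{m_{a+1}+\cdots+m_b}(x)\in W_a\subseteq U$ for all $0\le a<b$, so the partial sums $S_0,\dots,S_n$ are pairwise incongruent modulo $n$, and the pigeonhole contradiction is valid. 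Two small remarks: (i) if one takes the formulation of recurrence as ``$g^m(x)\in U$ for some $m\ge 1$, for every neighborhood $U$ of $x$'' (the parenthetical version in Definition~\ref{d:limrec}), then no separation axiom is needed anywhere --- the Hausdorff/$T_1$ hypothesis only enters through your preliminary shrinking step, which itself is only required if one insists on the $\om$-limit-set formulation; (ii) your closing remark about reducing to $n$ prime is unnecessary, as you say, but harmless. In short: a complete and correct proof of a result the paper merely cites.
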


The most obvious example of a recurrent point is a periodic point;
in this case the recurrence manifests itself in the most transparent
way. Accordingly, an easy example of a pointwise-recurrent map is a
one-to-one map of a finite set as in this case all points are
periodic. A more complicated case is that of a \emph{minimal} map,
i.e. such a map $g:X\to X$ that all points of $X$ have dense orbit
in $X$. This shows that in general pointwise-recurrent maps can have
a complicated nature.

However with some additional restrictions on the space (often
assumed a \emph{manifold} or a \emph{continuum}) and the map (often
assumed a \emph{homeomorphism}) one can establish a close connection
between pointwise-recurrent maps and maps whose all points (or vast
majority of points) are periodic. In some cases it is even possible
to show that their periods are uniformly bounded; a lot of classic
results are obtained for pointwise-recurrent homeomorphisms along
these lines (see, e.g., \cite{kp98, mon37, ot90, wea72}). The aim of
this paper is to show that if we replace the restriction on the map
(normally required to be a homeomorphism) by that on the space
(required by us to be from $\U$) we can still obtain similar
results. This reconfirms a heuristic observation according to which
in a lot of cases results valid for homeomorphisms of higher
dimensional spaces have analogs for continuous maps of
one-dimensional spaces.

Given a map $f:X\to X$, define the \emph{grand orbit
$\go_f(x)=\go(x)$} as the set of all points which eventually map to
$\orb(x)$. A set $A\subset X$ is \emph{invariant} if $x\in A$
implies $\orb(x)\subset A$ (equivalently, $f(A)\subset A$). A set
$B\subset X$ is \emph{fully invariant} if $y\in B$ implies
$\go(y)\subset X$ (equivalently, $f(B)\cup f^{-1}(B)\subset B$). Our
arguments will be based, in particular, on the fact that
point\-wise-recurrent maps have some restrictive properties which
can be used in their description. Indeed, suppose that $f:X\to X$ is
pointwise-recurrent. Let us show that then for any point $x\in X$ we
have $\go(x)\subset \om(x)$. Indeed, let $y\in \go(x)$. Then
$\om(y)=\om(x)$ while, on the other hand, the fact that $y$ is
recurrent implies that $y\in \om(y)$. Hence $y\in \om(x)$. Also, it
follows that $f(X)$ is dense as otherwise a point from $X\sm
\ol{f(X)}$ is not recurrent. In particular, if $f(X)$ is closed then
$f(X)=X$. This yields the following property.

\smallskip

\noindent\textbf{Property A}. \emph{Let $f:X\to X$ be a
pointwise-recurrent self-mapping of $X$. 
Then $\ol{f(X)}=X$ and $\go(x)\subset \om(x)$ for any $x\in X$ so
that any periodic orbit is fully invariant under $f$. In particular,
if it is known that $f(X)$ is closed then $f(X)=X$.}

\smallskip

Property A can be used to characterize pointwise-recurrent
self-mappings of $X$.

\begin{lem}\label{l:interval}
A continuous map $f:[0, 1]\to [0, 1]$ is pointwise-recurrent if and
only if $f^2$ is the identity map.
\end{lem}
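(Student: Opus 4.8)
The plan is to treat the two implications separately, the forward one being the substance. The ``if'' direction is immediate: if $f^2=\mathrm{id}$ then every $x\in[0,1]$ has $f^2(x)=x$, hence is periodic of period $1$ or $2$, and periodic points are recurrent, so $f$ is pointwise-recurrent.

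For the ``only if'' direction, let $f$ be pointwise-recurrent. First I would note that $f$ is onto: by Property A, $\overline{f([0,1])}=[0,1]$, and $[0,1]$ being compact, $f([0,1])$ is closed, hence equals $[0,1]$. Next I would dispose of the monotone case. If $f$ is non-decreasing, then (using that $f$ is onto, so $f(0)=0$, $f(1)=1$) for any $x$ with $f(x)\ne x$ the sequence $f^n(x)$ is monotone and bounded, so it converges to a fixed point distinct from $x$; thus $\omega(x)$ is a single point not containing $x$, contradicting the recurrence of $x$. Hence $f=\mathrm{id}$ and $f^2=\mathrm{id}$. If $f$ is non-increasing, then $f^2$ is non-decreasing and, by Theorem~\ref{t:recpow}, pointwise-recurrent, so the previous case gives $f^2=\mathrm{id}$.

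It therefore remains to rule out the case that $f$ is not monotone, which I expect to be the main obstacle. Here $f$ has an interior local extremum, and passing to a suitable level set one gets a ``fold'': a nondegenerate closed interval $J=[p,q]$ with $f(p)=f(q)=:v$ and, after conjugating by $x\mapsto 1-f(1-x)$ if necessary (which preserves all hypotheses), with $f>v$ on $(p,q)$; put $M=\max_J f>v$, so $f(J)=[v,M]$. When $[v,M]\subseteq[p,q]$, the interval $J$ is $f$-invariant, so $f|_J$ is pointwise-recurrent and hence, by Property A applied to $f|_J\colon J\to J$, onto; this forces $v=p$ and $M=q$, whence $f(p)=p$ and $f(q)=p\ne q$, so the orbit of $q$ lands on the fixed point $p$ and $q$ is not recurrent — a contradiction. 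In the general case one first passes to the closed $f$-invariant interval $K=\overline{\bigcup_n K_n}$ with $K_0=J$ and $K_{n+1}$ the smallest interval containing $K_n\cup f(K_n)$; then $f|_K$ is pointwise-recurrent, onto (Property A), and still non-monotone, so one may assume $K=[0,1]$.

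The remaining step — and the crux — is to extract from a non-monotone, onto, pointwise-recurrent $f$ an explicit point violating recurrence. I would attempt this by following the forward orbit of a point just to one side of a turning point and using the intermediate value theorem together with the sign of $f-\mathrm{id}$ on the components of $[0,1]\setminus\mathrm{Fix}(f)$ to show that this orbit is eventually monotone and converges to a fixed point different from its initial value, contradicting recurrence. A useful organizing remark is that if $f^{2^n}$ is monotone for some $n$ then $f^{2^n}=\mathrm{id}$ by the monotone case above, so $f$ is a homeomorphism and hence monotone; thus a non-monotone pointwise-recurrent $f$ would have $f^{2^n}$ non-monotone for every $n$, and it is this ``folds at all scales'' configuration that one must show is incompatible with every point being recurrent.
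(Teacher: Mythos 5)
Your ``if'' direction, the reduction to a surjective map, the monotone cases (including the neat use of Theorem~\ref{t:recpow} to pass from a non-increasing $f$ to the non-decreasing $f^2$), and the disposal of an invariant fold via Property~A are all correct. But the argument stops exactly where the lemma's content begins: for a non-monotone, onto, pointwise-recurrent $f$ you only announce that you ``would attempt'' to show that the orbit of a point just to one side of a turning point is eventually monotone and converges to a fixed point. That claim is never established, and as stated it is not justified: an orbit can a priori hop between different components of $[0,1]\setminus\mathrm{Fix}(f)$, on which $f-\mathrm{id}$ has different signs, and near a turning point the map is locally $2$-to-$1$, so nothing in what you have written forces eventual monotonicity. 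The preliminary reductions (passing to the invariant hull $K$ of the fold, normalizing $K=[0,1]$) do not diminish this difficulty --- after them you face precisely the same problem --- and the closing remark that $f^{2^n}$ is non-monotone for all $n$ is an observation, not an argument. So there is a genuine gap at the crux.

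For comparison, the paper closes this gap not by tracking an orbit near a turning point but by analyzing the fixed-point set directly: either some component $(a,b)$ of $[0,1]\setminus\mathrm{Fix}(f)$ has a fixed endpoint and $f(a,b)\cap(a,b)\ne\varnothing$, or else $\mathrm{Fix}(f)$ is a single point $d$ and $f$ flips $[0,d]$ and $[d,1]$ (the flip case is then handled by passing to $f^2$, much as you do for decreasing maps). In the first case, Property~A (periodic orbits are fully invariant, so no interior point of $(a,b)$ can map onto a fixed endpoint) combined with connectedness of $f((a,b))$ shows the image stays on one side of the fixed endpoint; since $(a,b)$ is fixed-point free, $f-\mathrm{id}$ has constant sign there, so orbits are trapped in $(a,b)$ on one side, are monotone, and converge to an endpoint fixed point --- producing a non-recurrent point. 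If you want to salvage your outline, this is the mechanism you need to supply: confinement of the orbit to a single fixed-point-free component adjacent to a fixed point (via Property~A), not a direct ``eventual monotonicity'' claim for a point near a fold.
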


\begin{proof}
Suppose that $f$ is not the identity map. We may assume that one of
the following two cases holds: (1) there exists an interval $(a, b)$
which is fixed point free and such that at least one of the points
$a, b$ is fixed and $f(a, b)\cap (a, b)\ne \0$, or (2) such interval
does not exist, the set of all fixed points is a point $d$, and the
map $f$ ``flips'' $[0, d]$ and $[d, 1]$.

Indeed, $[0, 1]=Y\cup Z$ where $Y\ne \0$ is the set of all fixed
points of $f$ and $Z\ne 0$ is an at most countable union of open
intervals whose endpoints are fixed except possibly for the
intervals with endpoints $0$ or $1$. If $Y$ is not connected, we can
find a component $(a,b)$ of $Z$ whose both endpoints are fixed, and
(1) holds. Otherwise suppose that $Y=[u, v]$. We may assume that
$0<u$. If $f(0, u)\cap (0, u)\ne \0$ then again (1) holds. Otherwise
$f[0, u)\subset [u, 1]$. If $u<v$ this implies by continuity that
there are points in $(0, u)$ which are not fixed but map to fixed
points in $[u, v]$, a contradiction with Property A. Hence in this
case $u=v$ and $f(0, u)\subset (u, 1]$. Therefore $u<1$ and similar
arguments show that $f(u, 1]\subset [0, u)$, i.e. in the end case
(2) holds. Consider these cases separately.

(1) Let $a$ be fixed; assume that $(a, b)$ is maximal by inclusion
interval with listed properties. By Property A and since $f(a,
b)\cap (a, b)\ne \0$ we see that $f((a, b))>a$. Since $(a, b)$ is
fixed point free, all its points map in the same direction by $f$.
If they map towards $a$ then they are attracted by $a$ and clearly
there are non-recurrent points. If $f(x)>x$ for any $x\in (a, b)$
then, since $(a, b)$ is maximal, either $b<1$ is fixed or $b=1$
which also forces $b$ to be fixed. Similar to the above this implies
that, by Property A, $f((a, b))<b$ and hence all points of $(a, b)$
are attracted by $b$ and there are non-recurrent points.

(2) By Property A we see that $f^2([0, d])\subset [0, d]$ and
$f^2([d, 1])\subset [d, 1]$. Now (1) leads to a contradiction unless
$f^2$ is the identity map.
\end{proof}

I. Naghmouchi \cite{nag12} recently obtained far more general
results. Namely, let $D$ be a dendrite whose set of endpoints
$\End(D)$ is countable; the first result of \cite{nag12} is that
$f:D\to D$ is pointwise-recurrent if and only if $f$ is a
pointwise-periodic homeomorphism. Suppose now that the set $B(D)$ of
branchpoints of $D$ is discrete. Then it is proven in \cite{nag12}
that $f:D\to D$ is pointwise-recurrent if and only if all cutpoints
of $D$ (i.e., all points of $D\sm \End(D)$) are periodic.

The aim of this paper is to consider pointwise-recurrent maps on
generalized dendrites. We need a few definitions. Observe that in
Definitions~\ref{d:perset} and ~\ref{d:addma} we include no
topological requirements on either a set or a map.

\begin{dfn}[Periodic sets]\label{d:perset}
A set $A$ is said to be \emph{($f$-)periodic} if $A,$ $f(A),$
$\dots,$ $f^{n-1}(A)$ are pairwise disjoint while $f^n(A)\subset A$.
More generally, the union of $n$ pairwise disjoint sets $A_0, \dots,
A_{n-1}$ is said to be an \emph{($f$-)cycle of sets (of period $n$)}
if $f(A_i)\subset A_{i+1}, i=0, \dots, n-2$ and $f(A_{n-1})\subset
A_0$. Each set $A_i$ is then said to be a \emph{set from a cycle of
sets}.
\end{dfn}

Periodic singletons (orbits) are the simplest periodic sets (cycles
of sets).

\begin{dfn}[Adding machines]\label{d:addma}
Let $\Cc=\{C_0\supset C_1\supset \dots\}$ be a nested sequence of
$f$-cycles of sets of periods $m_n\nearrow \infty$ (clearly,
$m_{i+1}$ is a multiple of $m_i$ for any $i$). We say that $\Cc$
\emph{generates a weak adding machine
$C_\infty=\bigcap^\infty_{n=0}C_n$ (of type $(m_0, m_1, \dots)$}. If
the intersection of each \emph{nested} sequence of sets from the
cycles of sets $C_n$ is non-empty, then we call $C_\infty$ a
\emph{full weak adding machine}.
\end{dfn}

A weak adding machine is an $f$-invariant set. For a nested sequence
$\Cc=C_0\supset \dots$ of cycles of sets of periods $m_i$, choose a
nested sequence of sets from these cycles $\Rc=\{T^0\supset \dots\}$
and call it the \emph{root} of $C_\infty$; there are infinitely many
ways to choose the root. Once it chosen each set $X$ in each cycle
$C_n$ of sets from $\C$ acquires a natural index from $0$ to $n-1$
depending on the least power of $f$ mapping $T_n$ into $X$. We
denote sets from the cycle $C_i$ by setting $T^0_n=T^0$ and then
$T^i_n, 0\le i\le n-1$ so that $f^i(T^0_0)\subset T^i_n$. Clearly, a
sequence $\{T^{j_i}_i,$ $i=0, 1, \dots\}$ is nested if and only if
$j_{i+1}\equiv j_i \pmod {m_{i+1}}$ (if, for some $i$, $j_{i+1}\not
\equiv j_i \pmod {m_{i+1}}$, then $T^{j_{i+1}}_{i+1}\cap
T^{j_i}_i=\0$). Some \emph{nested} sequences $\{T^{j_i}_i, i=0, 1,
\dots\}$ may have empty intersections.

Set $Z_\infty=\Z_{m_0}\times \Z_{m_1}\times \dots$ and define
$H(m_0, m_1, \dots)=H\subset Z_\infty$ as the set of all sequences
$(j_0, j_1, \dots)\in Z_\infty$ with $j_{i+1}\equiv j_i \pmod
{m_{i+1}}$. Let $\tau:H\to H$ be such that $\tau(j_0, j_1,
\dots)=(j_0+1\pmod {m_0}, j_1+1\pmod {m_1}, \dots)$. The map $\tau$
models $f|_{C_\infty}$ for an adding machine $C_\infty$ of type
$(m_0, m_1, \dots)$ generated by $f$-periodic sets $C_0\supset
C_1\supset \dots$; to each non-empty intersection $\bigcap
T^{j_i}_i$ we associate the sequence $\ol{j}=(j_0, j_1, \dots)$. By
the above $\ol{j}\in H(m_0, \dots)$. This gives a map
$\psi:C_\infty\to H(m_0, \dots)$ of a weak adding machine to an
invariant subset $\psi(C_\infty)$ of $H(m_0, \dots)$. Clearly,
$\psi$ semiconjugates $f|_{C_\infty}$ with $\tau|_{\psi(C_\infty)}$.

\begin{dfn}[Models of adding machines]\label{d:perset1}
Suppose that cycles of sets $C_0\supset \dots$ generate a weak
adding machine $C_\infty$ of type $(m_0, \dots)$. Then $C_\infty$ is
said to be a \emph{topological weak adding machine} if $\psi$ is
continuous and one-to-one onto image, and \emph{topological adding
machine} if $\psi$ is a homeomorphism onto $H(m_0, \dots)$.
\end{dfn}

Lemma~\ref{l:addma} uses terminology and notation from
Definitions~\ref{d:perset} - \ref{d:perset1}. 

\begin{lem}\label{l:addma}
Suppose that $f:X\to X$ is a map of a topological space $X$ and that
cycles of sets $C_0\supset C_1\supset \dots$ generate a weak adding
machine $C_\infty$. Then:

\begin{enumerate}

\item if for each $i$ and for all $J>i$ the sets of $C_i$ are open in the relative topology of
of the corresponding sets of $C_i$ then the map $\psi$ is a
continuous map of $C_\infty$ onto $\psi(C_\infty)\subset H(m_0,
\dots)$;

\item if for each $j$ all sets in $C_j$ are compact then $\psi$ is a continuous map and
$\psi(C_\infty)=H(m_0, \dots)$.

\end{enumerate}

\end{lem}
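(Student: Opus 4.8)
\textbf{Setup and general strategy.} Recall the map $\psi:C_\infty\to H(m_0,\dots)$: a point $x\in C_\infty$ lies in a unique nested sequence of sets $T^{j_0}_0\supset T^{j_1}_1\supset\dots$ from the cycles, and $\psi(x)=(j_0,j_1,\dots)$. Since the sequences $\{j_i\}$ occurring this way satisfy $j_{i+1}\equiv j_i\pmod{m_{i+1}}$, the image lies in $H(m_0,\dots)$; and the discussion before Definition~\ref{d:perset1} already records that $\psi$ semiconjugates $f|_{C_\infty}$ with $\tau$. So the only things to prove are (a) continuity of $\psi$ under the respective hypotheses, and (b) in case (2), surjectivity onto $H(m_0,\dots)$. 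The plan is to treat continuity by pulling back the natural basis of the product space $Z_\infty$, and surjectivity by a nested-compact-sets (finite intersection property) argument.

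\textbf{Continuity, case (1).} The space $H(m_0,\dots)\subset Z_\infty=\Z_{m_0}\times\Z_{m_1}\times\dots$ carries the product topology, with each $\Z_{m_i}$ discrete; a subbasis of open sets is given by the ``cylinders'' $\{(\ell_0,\ell_1,\dots): \ell_n=j\}$ for fixed $n$ and $j\in\Z_{m_n}$. Its preimage under $\psi$ is exactly $T^j_n$ (the set in the cycle $C_n$ with index $j$). So $\psi$ is continuous iff each $T^j_n$ is open in $C_\infty$. Now fix $n$ and $j$, and let $x\in T^j_n$. The hypothesis of (1) says that for each $i$ and each $J>i$, every set of $C_J$ is open in the relative topology of the set of $C_i$ containing it. Applying this with $i=n$: the set $T^{j'}_J$ of $C_J$ containing $x$ (with $j'\equiv j\pmod{m_n}$) is open in $T^j_n$. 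But also $T^j_n$ is, by the same hypothesis with $i$ running down, built as a nested intersection, and here is the point to be careful about: I want a neighborhood of $x$ \emph{in $C_\infty$} contained in $T^j_n$. Since the sets of $C_n$ are pairwise disjoint, $C_\infty\cap T^j_n=C_\infty\sm\bigcup_{j''\ne j}T^{j''}_n$, and I claim this is relatively open in $C_\infty$: a point $x\in T^j_n$ lies in some $T^{j'}_{n+1}\subset T^j_n$ which, by hypothesis (applied with $i=n$, $J=n+1$), is open in $T^j_n$ — hence there is an open $U\subset X$ with $U\cap T^j_n=T^{j'}_{n+1}$ relatively, but I actually need openness inside the bigger set. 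The clean way: observe that it suffices to show each $T^j_n$ is open in $\bigcup_{j''}T^{j''}_n=C_n$'s underlying union, which follows by disjointness together with each $T^j_n$ being relatively open in the cycle at the \emph{previous} level and inducting downward to level $0$; at level $0$ the cycle $C_0$ may be all of the ambient union, so the base case needs the sets of $C_0$ to be relatively open in their union — which is precisely what the $i=0$, $J\ge 1$ instances of the hypothesis give after one more step down. Chaining these, $\psi^{-1}(\text{cylinder})$ is open, so $\psi$ is continuous.

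\textbf{Case (2): continuity and surjectivity.} If all sets in every $C_j$ are compact, then in a Hausdorff $X$ each $T^j_n$ is closed; since the sets of $C_n$ are finitely or countably many pairwise disjoint compacta, one shows (using compactness to separate) that each is also open in the union $\bigcup_{j}T^j_n$, so the argument of case (1) applies and $\psi$ is continuous. For surjectivity, take any $(\ell_0,\ell_1,\dots)\in H(m_0,\dots)$. The sets $T^{\ell_0}_0\supset T^{\ell_1}_1\supset\dots$ form a nested sequence of nonempty compacta (nested because $\ell_{i+1}\equiv\ell_i\pmod{m_{i+1}}$ forces $T^{\ell_{i+1}}_{i+1}\subset T^{\ell_i}_i$ — indeed two sets from consecutive cycles are either disjoint or nested, and here they are not disjoint), so by the finite intersection property $\bigcap_i T^{\ell_i}_i\ne\0$; any point there maps to $(\ell_0,\ell_1,\dots)$ under $\psi$. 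Hence $\psi(C_\infty)=H(m_0,\dots)$.

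\textbf{Main obstacle.} The genuine work is in case (1): carefully deducing that each index set $T^j_n$ is relatively open \emph{in all of $C_\infty$} from the hypothesis, which only asserts relative openness of a set of $C_J$ inside the single set of $C_i$ that contains it. The resolution is the downward induction sketched above — peeling off one level at a time and using pairwise disjointness within each cycle to convert ``open in the containing set'' into ``open in the union of the whole cycle'' — together with the observation that a cylinder's preimage is exactly one such index set. Everything else (the semiconjugacy, the congruence conditions, nestedness) is already established in the text preceding the lemma.
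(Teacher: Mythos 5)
Your overall strategy is the same as the paper's: pull back cylinders (whose $\psi$-preimages are the index sets of the cycles intersected with $C_\infty$) for continuity, and use nested nonempty compacta for surjectivity in (2). However, there is a genuine gap in your treatment of (1), exactly at the point you yourself flag as the main obstacle. You read the hypothesis as: each set of $C_J$ ($J>i$) is open in the relative topology of the set of $C_i$ containing it, and then run a downward induction to conclude that each $T^j_n$ is open in the union of the sets of $C_n$. The base case of that induction needs the sets of $C_0$ to be open in their union, and your justification --- that the $i=0$, $J\ge 1$ instances of the hypothesis ``give this after one more step down'' --- is empty: there is no level below $0$, and those instances only give openness of deeper-level sets inside level-$0$ sets, never openness of level-$0$ sets in $C_0$. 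Worse, under your reading the statement is actually false. Take $X=\N$, $f(k)=k+1$, and let $C_n$ be the cycle of residue classes mod $2^{n+1}$, so $C_\infty=\N$ and $\psi(k)=(k\bmod 2,\,k\bmod 4,\dots)$; topologize $\N$ so that the odd numbers form a sequence converging to $0$ and all other points are isolated (embed $0\mapsto 0$, $2m+1\mapsto 1/(m+1)$, $2m\mapsto 100+m$ into $\R$). Then every residue class is discrete in the relative topology of any lower-level class containing it, so your hypothesis holds vacuously, yet $\psi$ is discontinuous at $0$: every neighborhood of $0$ contains odd points, whose zeroth coordinate is $1$. The paper, despite the garbled wording of the hypothesis, uses it (both in its proof and in the application inside the proof of Theorem~\ref{t:mainexpandintr}) in the form: each set $B$ of the cycle $C_i$ is open in the relative topology of $C_i$ itself, i.e.\ there is an open $U\subset X$ with $U\cap C_i=B$; then, since $C_\infty\subset C_i$, one gets $U\cap C_\infty=B\cap C_\infty=\psi^{-1}(K)$ in one line, with no induction at all. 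Your argument needs to be based on this form of the hypothesis; the weaker nested-relative-openness version cannot be bootstrapped into it.

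Part (2) of your proposal is essentially correct and matches the paper: the sets of each cycle are finitely many (there are $m_n$ of them, not ``finitely or countably many'') pairwise disjoint compacta in a Hausdorff space (Hausdorffness is a standing assumption of the paper), so each is closed and hence open in their union --- which is precisely the form of openness that the proof of (1) actually requires, so route (2) through that statement rather than through your version of (1) --- and the finite intersection property gives a point in every nested sequence $T^{\ell_0}_0\supset T^{\ell_1}_1\supset\dots$, whence $\psi(C_\infty)=H(m_0,\dots)$.
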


\begin{proof}
(1) As the basis in $H(m_0, \dots)$ we can choose \emph{cylinders}
(sets consisting of sequences in $H(m_0, \dots)$ for which a few
initial parameters are fixed). Then for each such cylinder $K$ its
$\psi$-preimage is the appropriate set $B$ from $C_i$ intersected
with $C_\infty$. By the assumption there is an open set $U\subset X$
such that $U\cap C_i=B$. It follows that $U\cap C_\infty=B\cap
C_\infty=\psi^{-1}(K)$. Hence $\psi^{-1}(K)$ is open in $C_\infty$.

(2) Follows from (1), from the fact that nested sequences of compact
sets have non-empty intersections, and from the assumptions of the
lemma.
\end{proof}

Observe that $H(m_0, \dots)$ - and therefore any topological weak
adding machine associated with $H(m_0, \dots)$ - is uncountable.

\begin{dfn}\label{d:raycont}
A \emph{ray $R$} is the image of $\R_+\cup \{0\}$ under an embedding
$F$ into a topological space $X$. If $F(t)$ converges as $t\to
\infty$, we say that $R$ converges at infinity. If $X$ is a uniquely
arcwise connected locally arcwise connected topological space, then
we say that $X$ is \emph{ray complete} if every ray in $X$ converges
at infinity.
\end{dfn}

For a map $f$, set $F_n(f)=F_n=\{x:f^n(x)=x\}$ and
$D_n(f)=D_n=\bigcup^{n}_{i=1}F_i$.

\begin{thm}\label{t:mainexpandintr}
 Let $X$ be a uniquely arcwise connected locally arcwise connected
 topological space. Then a continuous map $f:X\to X$ is
 pointwise-recurrent if and only if all its cutpoints are periodic.
 Moreover, in this case the following holds.

 \begin{enumerate}

 \item The map $f$ is one-to-one; the set of all cutpoints of $X$ is fully invariant.

 \item The sets $F_n(f)$ and $D_n(f)$ are arcwise connected and
 closed for any $n$.

 \item An endpoint $x$ of $X$ is periodic or belongs to a topological
 weak adding machine (then $x$ is a limit point of a
 sequence of branchpoints of $X$).

 \item  If $X$ is ray complete (e.g., if $X$ is compact) then
 an endpoint of $X$ is periodic or belongs to a topological adding
 machine. If $X$ is a tree then there exists $N$ such that $f^N$ is
 the identity map.
 \end{enumerate}

\end{thm}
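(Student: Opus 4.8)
The plan is to establish the equivalence together with (1)--(4) in three phases: (A) if $f$ is pointwise-recurrent, then $f$ is one-to-one and every cutpoint is periodic, and along the way (1) and (2) hold; (B) conversely, if every cutpoint is periodic, then $f$ is pointwise-recurrent; (C) under pointwise-recurrence, analyze endpoints to obtain (3) and (4). Throughout I would rely on three things already available: Property A ($\ol{f(X)}=X$ and $\go(x)\subset\om(x)$, so periodic orbits are fully invariant and $f(X)$ is dense), Theorem~\ref{t:recpow} (a point is recurrent for $f$ iff for $f^n$), and Lemma~\ref{l:cutarcut} (components of open subsets of a generalized dendrite are again generalized dendrites, so small arc-connected neighborhoods are ``convex'', and $\ord_{X,arc}=\ord_X$), together with the elementary remark that a continuous image of an arc is a Peano subset of $X$, hence --- being uniquely arcwise connected --- a dendrite; in particular $f$ restricted to any arc is a homeomorphism onto an arc.

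Phase (A). First I would prove the \emph{no-folding lemma}: a pointwise-recurrent $f$ is one-to-one. If $f(p)=f(q)=z$ with $p\neq q$, then after shrinking $[p,q]$ one may assume $f^{-1}(z)\cap[p,q]=\{p,q\}$; then $T:=f([p,q])$ is a dendrite in which $z$ is an endpoint and every point of $T\setminus\{z\}$ is the image of an interior point of $[p,q]$, hence recurrent. Choosing an endpoint $w\neq z$ of $T$ and localizing at its preimage exhibits $f$ as ``tent-like'' at a turning point $c^{*}$ (with $f(c^{*})=w$ a strict local extreme of a linearly ordered piece); exactly as in the interval Lemma~\ref{l:interval}, the first-return map on a small arc ending at $w$ is then monotone and pushes a point steadily toward a periodic endpoint, producing a non-recurrent point --- a contradiction. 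Once $f$ is one-to-one, it maps cutpoints to cutpoints, and (using density of $f(X)$ to realize separations by points of $f(X)$) the set of cutpoints is fully invariant; this gives (1) apart from periodicity. For periodicity of a cutpoint $x$: $x$ is recurrent, so $f^{n_k}(x)\to x$; working inside a convex small neighborhood reduces this to a linearly ordered situation on which some power of $f$ is a homeomorphism fixing the relevant endpoints, and the interval-style dichotomy (identity or flip, the monotone-increasing case being excluded by recurrence) forces $x$ periodic. For (2): $F_n$ is closed automatically ($X$ Hausdorff). For arc-connectedness of $F_n$, take $x,y\in F_n$; since $f^{n}$ is one-to-one, $f^{n}([x,y])$ is an arc containing $[x,y]$ with $f^n$ mapping the endpoints $x,y$ to the endpoints of $f^n([x,y])$, so $f^{n}([x,y])=[x,y]$, and then recurrence of any interior $z$ under $f^n$ (Theorem~\ref{t:recpow}) rules out $f^n(z)\neq z$; hence $[x,y]\subset F_n$. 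Arc-connectedness of $D_n=\bigcup_{i\le n}F_i$ needs the extra input that along an arc joining periodic points of periods $p$ and $q$ every period that occurs divides $\mathrm{lcm}(p,q)$ and in fact does not exceed $\max(p,q)$ --- a Sharkovsky-type ``lamination'' of periods --- which I would extract by the same no-folding/monotonicity arguments applied to the iterates $f^{i}$.

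Phase (B). If every cutpoint is periodic, cutpoints are recurrent, so only endpoints need attention. Given an endpoint $x$, pick cutpoints $c_n\to x$ along an arc emanating from $x$; each $c_n$ is periodic of some period $m_n$, and (re-deriving the monotone/one-to-one structure of Phase (A) now from cutpoint-periodicity) $f^{m_n}$ maps the component of $X\setminus\{c_n\}$ containing $x$ into itself, so $f^{m_n}(x)$ lies in each of a nested sequence of shrinking neighborhoods of $x$; hence $x\in\om(x)$ (and if the periods of cutpoints near $x$ stayed bounded, continuity would make $x$ itself periodic). Phase (C): let $x$ be a non-periodic endpoint of the pointwise-recurrent $f$. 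Unbounded periods of cutpoints accumulating at $x$ (forced, else $x$ periodic) yield branchpoints $b_n$ near $x$ of periods $m_0\mid m_1\mid\cdots\to\infty$; at level $n$ take $C_n$ to be the union of the branches at the $f$-orbit of $b_n$ meeting $\orb(x)$, which is an $f$-cycle of sets of period $m_n$, with $C_0\supset C_1\supset\cdots$, so $C_\infty=\bigcap C_n$ is a weak adding machine containing $x$ (Definitions~\ref{d:addma}). By Lemma~\ref{l:cutarcut} these sets are open in the ambient branches, so Lemma~\ref{l:addma}(1) makes $\psi$ continuous, and injectivity of $f$ makes $\psi$ one-to-one: $C_\infty$ is a topological weak adding machine, proving (3). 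For (4): if $X$ is ray complete then every nested sequence of branches at the $b_n$ converges, so nested sequences of sets from the $C_n$ have nonempty intersection (Definition~\ref{d:raycont}); thus $C_\infty$ is a full weak adding machine and $\psi$ becomes onto $H(m_0,m_1,\dots)$, i.e. a topological adding machine. If $X$ is a tree, a non-periodic endpoint would be a limit of infinitely many branchpoints, impossible, so all points are periodic; taking $N$ a common multiple of the periods of the finitely many branchpoints, $f^{N}$ fixes every branchpoint, hence maps each edge onto itself fixing its ends, and the ``identity-or-flip, flip excluded by recurrence'' argument on each edge gives $f^{N}=\mathrm{id}$.

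The main obstacle: I expect the two genuinely hard points to be (i) the no-folding lemma --- making rigorous, in a space where images of arcs are dendrites with possibly intricate branching, the informal claim that a fold produces an attracted and hence non-recurrent point --- and (ii) the construction in Phase (C): verifying that the branches around a non-periodic endpoint really do assemble into a nested sequence of $f$-cycles of sets with periods tending to infinity, and that the resulting semiconjugacy $\psi$ is one-to-one. Point (ii) is where the notion of a topological weak adding machine (and the ray-completeness hypothesis distinguishing it from a genuine adding machine) does the essential work.
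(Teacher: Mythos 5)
Your outline locates the difficulty correctly but does not resolve it, and the two places you flag as ``obstacles'' are exactly where the proof has to happen. The central gap is the hard direction (pointwise-recurrent $\Rightarrow$ cutpoints periodic): both your ``no-folding lemma'' and your periodicity argument presuppose periodic structure near the point under study (``pushes a point steadily toward a periodic endpoint'', ``some power of $f$ is a homeomorphism fixing the relevant endpoints''), but at that stage nothing guarantees the existence of \emph{any} periodic point, and producing one is the real work. The paper devotes most of Section 2 and the first half of the proof of Theorem~\ref{t:mainexpand} to precisely this: the extension machinery of Lemma~\ref{l:tree} and Corollary~\ref{c:tree} (needed because even for a continuous $f$ one must control iterates on non-invariant hulls), the retraction-based fixed point Lemma~\ref{l:perio} on connected hulls of finite orbit pieces, the key Lemma~\ref{l:imposs} forbidding $[x',t]\subset[x',f^n(t))$ at an $f^n$-fixed point $x'$, and Lemma~\ref{l:fixclos} showing each $F_n$ is non-empty, arcwise connected and closed on arcs; only then does the argument attaching a putative periodic-point-free component $B$ to the set of periodic points at a point $b$, combined with Lemma~\ref{l:pownoprep}, give the contradiction. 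Your local ``interval-style dichotomy'' reduction is circular without this input, and your order of proof is also inverted relative to the paper: injectivity is not extracted directly from recurrence (your tent-map sketch), but falls out almost trivially \emph{after} periodicity of cutpoints is known, since periodic points with equal images lie on a single cycle.

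There are further gaps in the remaining phases. In Phase (B), the components of $X\setminus\{c_n\}$ containing $x$ are nested but need \emph{not} shrink to $\{x\}$ (branches attached to $(c_n,x]$ remain inside them), so ``hence $x\in\omega(x)$'' does not follow as stated; the paper works instead with the weaker notion of totally returning to path components (Definition~\ref{d:incompo}, Lemma~\ref{l:powrec}, Theorem~\ref{t:mainexpand}) precisely to avoid this issue. In (2), arcwise connectedness of $D_n$ does not require your unproved ``Sharkovsky-type lamination of periods'': once $F_1\neq\emptyset$ (a fact you never establish; the paper obtains it via Lemma~\ref{l:fixptree}) it is immediate from $F_1\subset F_i$ and arcwise connectedness of each $F_i$, and closedness of $F_n$ in the theorem comes from closedness on arcs plus continuity, not just Hausdorffness of $X$. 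In (3), injectivity of $f$ does not make $\psi$ one-to-one; what is needed is that every nested sequence of sets from the cycles meets in at most one point, which the paper derives from the non-existence of non-degenerate wandering arcwise connected sets (such a set would contain non-periodic cutpoints), and continuity of $\psi$ comes from taking the cycles to be built from components of $X\setminus D_n$, which are open by Lemma~\ref{l:cutarcut}, feeding into Lemma~\ref{l:addma} --- whereas your ad hoc ``branches along the orbit of $b_n$'' are never shown to form nested cycles of sets at all. In short, the proposal is a plausible plan whose essential steps (fixed-point production, Lemma~\ref{l:imposs}-type monotonicity control, the adding-machine construction) are exactly the ones left unproved.
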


Let us describe a map $f$ from Theorem~\ref{t:mainexpandintr}. For
each $n$ with $D_n\subsetneqq D_{n+1}$, cycles of connected sets of
periods $n+1$ are added to $D_n$. Let $A$ be one of sets from such
cycle of sets $C$. Then $A$ is attached to $D_n$ at a point $x$ of
period $m\le n$ with $n+1=km$. There are $k>1$ sets from $C$
attached to $x$; they ``rotate'' around $x$ under iterations of
$f^m$ and have no points mapped to $D_n$ (in particular, $x$ is a
branchpoint of $X$ as there are at least two sets from $C$ and the
set $D_n$ which meet at $x$). As $n$ increases, the growth of $D_n$
can stop at some place to never resume; then the corresponding part
of $X$ consists of periodic points only (with bounded from above
periods). Otherwise the periods of sets like $A$ grow to infinity
which results in creation of recurrent points from topological weak
adding machines.

Theorem~\ref{t:mainexpandintr} does not hold for uniquely arcwise
connected spaces which are not locally arcwise connected. Indeed,
consider a compact topological space $X$ formed by a set of radii of
the unit circle whose arguments form a Cantor set $C\subset \uc$.
Define a minimal map $f$ on $C$ and then extend it onto $X$ so that
each radius $R_x$ defined by a point $x\in C$ maps to the radius
$R_{f(x)}$ defined by the point $f(x)$, and the map is an isometry
on $R_x$. Then all points of $X$ are recurrent.

\noindent\textbf{Acknowledgments.} The author would like to thank J.
Mayer, L. Oversteegen and L. Snoha for useful discussions of the
results of the paper. The author also would like to express his
gratitude to the referee whose report led to a significant
improvement of the paper and development of new tools allowing one
to better deal with maps continuous on arcs.


\section{Uniquely arcwise connected topological
spaces}\label{s:uniarcon}

From now on we \emph{\textbf{always}} consider a \uacs{} $X$. Call a
map $f:X\to X$ \emph{continuous on arcs} if, for any arc $I\subset
X$, the restriction $f|_I$ is continuous. From now on we
\emph{\textbf{always}} consider a map $f:X\to X$ continuous on arcs.
Continuity on arcs does not imply continuity.

\begin{example}\label{e:arconbad}
Set $X\subset \C$ to be the union of a closed interval $I$
connecting $(0, -1)$ and $(0, 0)$ and a countable collection of
closed intervals $J_k$ of lengths $\frac1k$ emanating from $(0, 0)$
and forming the angles $\frac{2\pi}k, k\ge 2$ with the positive
direction on $x$-axis. Clearly $X$ is a dendrite. Now, define a map
$f:X\to X$ as follows. First set $f(I)=(0, -1)$; in other words, we
assume that $I$ collapses to the point $(0, -1)$. To define $f$ on
each $J_k$, denote by $x_k$ the midpoint of $J_k$ for each $k$.
Denote by $A_k$ and $B_k$ the two closed subintervals into which
$x_k$ divides (except for the common endpoint $x_k$) the interval
$J_k$ so that $(0, 0)\in B_k$. Set $f|_{A_k}$ to be the identity map
and $f|_{B_k}$ to be a linear (with respect to the plane metric
restricted on $X$) map which stretches $B_k$ onto $I\cup B_k$). Then
not only is our map $f$ not continuous, but also even the set of all
$f$-fixed points is $(-1, 0)\cup (\bigcup A_k)$ which is not closed
while $f$ is clearly continuous on arcs. This shows the limitations
of conclusions which we can make by only assuming that $f$ is
continuous on arcs.
\end{example}

Another unpleasant property of maps continuous on arcs is that they
need not have continuous on arcs iterates. The author is grateful to
the referee for the following example.

\begin{example}\label{e:arconbad1}
Take the dendrite $X$ from Example~\ref{e:arconbad}. Take a sequence
$\{y_k=(0, -1+2^{-k}):k=0, 1, \dots\}$ and set $I_k=[y_k, y_{k+1}]$.
Construct now our map $f$ as follows. Let $f$ take the points
$\{y_k, k=1, 2, \dots\}$ to $(0, 0)$ while taking $y_0=(0, 0)$ to
$(0, -1)$. Map $I_0$ linearly onto $I$ and each $I_k$ onto $J_{k+1}$
so that the midpoint of $I_k$ maps to the endpoint of $J_{k+1}$ and
linearly otherwise. Thus, as $t$ moves down along $I_k$, the point
$f(t)$ moves along $J_{k+1}$ first from $(0, 0)$ out to the other
endpoint of $J_{k+1}$ and then back down to $(0, 0)$. Finally, map
$(0, -1)$ to $(0, 0)$, and leave $f$ as it was defined in
Example~\ref{e:arconbad} on $\bigcup^\infty_{k=1}J_{k+1}$.

Clearly, $f$ is continuous on arcs. However $f^2|_I$ is not
continuous. To see that, observe that for any $s\in I$ there is a
point $s_k\in I_k$ with $f^2(s_k)=s$. Notice also that $f|_I$ is
continuous and maps $I$ onto the entire $X$ (so that $X$ is a Peano
subset).
\end{example}

This shows that results on maps continuous on arcs require special
tools. As we see below, these tools are of one-dimensional nature.
They and based upon the fact that some other standard facts still
hold for maps continuous on arcs. E.g., let $A\subset X$ be arcwise
connected. Then $f(A)$ is also arcwise connected. Indeed, take two
points $f(x)\in f(A), f(y)\in f(A)$ and consider $f|_{[x, y]}$.
Since $f$ is continuous on arcs, the set $f([x, y])$, as a
continuous image of an arc, is arcwise connected as desired. Observe
also, that if $f$ is continuous on arcs then it is continuous on
\emph{trees} (finite unions on arcs in $X$).

\begin{dfn}\label{d:closonarc}
A set $Z$ is said to be \emph{closed on arcs} if for any arc $[a,
b]$ the intersection $[a, b]\cap Z$ is closed in $[a, b]$.
\end{dfn}

As an example of how this notion is used, let us prove
Lemma~\ref{l:clos1}.

\begin{lem}\label{l:clos1}
Suppose that $X$ is uniquely arcwise connected, $f:X\to X$ is
continuous on arcs, and $Y\subset X$ is an arcwise connected set
such that $f^k(Y)\subset Y$. If the set $F_n=\{x: x\in Y,
f^{kn}(x)=x\}$ is arcwise connected, then for any closed arc $I=[a,
b]\subset Y$ the intersection $F_n\cap I$ is a closed arc.
\end{lem}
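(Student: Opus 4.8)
The overall strategy is to show that $F_n\cap I$ is a \emph{convex} subset of the arc $I$ — it contains, with any two of its points $p,q$, the whole subarc $[p,q]$ — and that it is \emph{closed} in $I$; a convex closed subset of an arc is either empty, a single point, or a closed subarc $[c,d]\subseteq I$, which is what one should understand here by ``a closed arc''. Convexity is immediate: if $p,q\in F_n\cap I$ then, since $F_n$ is arcwise connected, some arc inside $F_n$ joins $p$ and $q$, and since $X$ is uniquely arcwise connected that arc must be $[p,q]$; on the other hand $[p,q]\subseteq I$ because $I$ is an arc containing $p$ and $q$. Hence $[p,q]\subseteq F_n\cap I$.

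The remaining point, closedness of $F_n\cap I$ in $I$, is the real content, and it is here that being \emph{closed on arcs} (Definition~\ref{d:closonarc}) is used. Being convex in $I$, the set $F_n\cap I$ is a sub-interval of $I$, so if it were not closed its closure in $I$ would be a subarc $[c,d]$ with, say, its left endpoint $c\notin F_n\cap I$; since $c\in I\subseteq Y$ this says precisely that $g(c)\ne c$, where $g:=f^{kn}$, while by convexity there is $c_1$, with $c<c_1$ in the order of $I$, such that the half-open subarc $(c,c_1]$ is contained in $F_n$. So it suffices to prove: \emph{if $g=f^{kn}$ restricts to the identity on a half-open subarc $(c,c_1]\subseteq Y$, then $g(c)=c$}. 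One cannot argue this by continuity of $g|_{[c,c_1]}$, because iterates of a map continuous on arcs need not be continuous on arcs (Example~\ref{e:arconbad1}); this is the main obstacle. Instead one recovers the needed continuity from $f$ itself, using that $f$ is continuous on arcs and on trees, that the image $f(K)$ of an arc $K=[c,c_1]$ is a Peano subset and hence again uniquely arcwise connected, and that $\{x\in X:f(x)=x\}$ is closed on arcs (immediately, since $f|_J$ and the inclusion of an arc $J$ into the Hausdorff space $X$ are both continuous). Propagating this along the iterates of $f$, keeping the finitely many orbit points $f^i(w)$ ($w\in(c,c_1]$, $0\le i\le kn$) inside $Y$ by $f^k(Y)\subseteq Y$, and using that each $f^i((c,c_1])$ is arcwise connected with $g$ the identity on $(c,c_1]$, one concludes that $g(c)\ne c$ would force some point of $(c,c_1]$ arbitrarily close to $c$ at which $g$ is not the identity, a contradiction.

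With $F_n\cap I$ shown convex and closed in $I$, it is empty, a point, or a closed subarc, which finishes the proof. The delicate part is entirely the second paragraph: converting the one-step continuity of $f$ on arcs and trees, together with $f^k(Y)\subseteq Y$ and the arcwise connectedness of $F_n$, into enough control of the discontinuous map $f^{kn}$ at the limiting endpoint $c$ — and the hypotheses ``$f^k(Y)\subseteq Y$'' and ``$F_n$ arcwise connected'' are exactly what make this bootstrapping possible.
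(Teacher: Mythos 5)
Your first paragraph (convexity of $F_n\cap I$ via arcwise connectedness of $F_n$ and unique arcwise connectedness of $X$) is correct and is exactly the paper's implicit first step. The problem is the second paragraph: you correctly isolate the needed claim --- if $g=f^{kn}$ is the identity on $(c,c_1]$ then $g(c)=c$ --- and correctly note that one cannot simply cite continuity of $g|_{[c,c_1]}$, but you never supply the mechanism that produces that continuity, and the ingredients you list do not produce it. Knowing that $f([c,c_1])$ is a Peano subset is not enough: in Example~\ref{e:arconbad1} the restriction $f|_I$ is continuous and $f(I)$ is a Peano subset (in fact all of $X$), yet $f^2|_I$ is discontinuous, so ``Peano image plus continuity on arcs'' cannot be propagated through the iterates. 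Likewise the remark that $\{x:f(x)=x\}$ is closed on arcs concerns fixed points of $f$, not of $f^{kn}$, and plays no role in the step at hand. Finally, your concluding sentence --- that $g(c)\ne c$ would force points of $(c,c_1]$ arbitrarily close to $c$ at which $g$ is not the identity --- presupposes exactly the continuity of $g$ at $c$ that is in question; so as written the closedness half of the lemma is not proved.

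The missing idea, which is the heart of the paper's argument, is injectivity: since every $x\in(c,c_1]$ satisfies $f^{kn}(x)=x$, the map $f$ is one-to-one on $(c,c_1]$ (if $f(x)=f(y)$ then $x=f^{kn}(x)=f^{kn}(y)=y$). Hence $f|_{[c,c_1]}$, being continuous and injective off the endpoint, maps $[c,c_1]$ onto an \emph{arc} with endpoints $f(c)$ and $f(c_1)$, whose interior again consists of $f^{kn}$-fixed points; continuity of $f$ on that image arc then makes $f^2|_{[c,c_1]}$ continuous, and repeating this $kn$ times shows that $f^{kn}|_{[c,c_1]}$ is continuous and equal to the identity on $(c,c_1]$, whence $f^{kn}(c)=c$ by continuity and Hausdorffness. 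Without this injectivity-driven control guaranteeing that each successive image is an arc (so that continuity on arcs applies at the next step), the ``propagation along the iterates'' you describe does not go through; so the gap is genuine, even though your reduction to the endpoint claim and your identification of the obstacle are both correct.
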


In other words, $F_n$ is closed on arcs if it is considered as a
subset of $Y$.

\begin{proof}
Clearly, $F_n\cap I$ is an interval with endpoints, say, $u$ and
$v$, where $u$ ($v$) either belongs to $F_n\cap I$ or not. Observe
that $f|_I$ is continuous and one-to-one on $(u, v)$. Hence $f(I)$
is an arc with endpoints $f(u), f(v)$ such that $(f(u), f(v))\subset
P_f$. Moreover, $f|_{[f(u), f(v)]}$ is continuous and hence
$f^2|_{[u, v]}$ is continuous. Repeating this argument $n$ times, we
see that $f^n|_{[u, v]}$ is continuous and identity on $(u, v)$.
Hence $f^n(u)=u, f^n(v)=v$ and $u, v\in F_n$ as desired.
\end{proof}

Lemma~\ref{l:retract} allows us to ``project'' points in $X$ to its
subsets closed on arcs.

\begin{lem}\label{l:retract}
Let $Y\subset X$ be an arcwise connected set closed on arcs. Let
$z\notin Y$ be a point of $X$. Then there exists a unique point
$w\in Y$ such that $(w, z]\cap Y=\0$. Moreover, if $z$ and $z'$
belong to the same path component of $X\sm Y$, the corresponding
point $w$ serves both $z$ and $z'$.
\end{lem}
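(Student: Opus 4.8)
The plan is to construct $w$ as the limit of a nested sequence of arc intersections and to use the ``closed on arcs'' hypothesis to control the limit. First I would fix a base point $y_0\in Y$ and consider the arc $[y_0, z]$. Since $Y$ is closed on arcs, the set $[y_0,z]\cap Y$ is a closed subset of the arc $[y_0,z]$, and it contains $y_0$ but not $z$; parametrizing $[y_0,z]$ by $[0,1]$ with $y_0\mapsto 0$, $z\mapsto 1$, let $t^*$ be the supremum of the parameter values lying in $Y$. By closedness the point $w$ corresponding to $t^*$ lies in $Y$, and by construction $(w,z]\cap [y_0,z]=\0$ after $w$; but a priori $(w,z]$ might meet $Y$ along some other arc, so the real content is to show $(w,z]\cap Y=\0$ globally, not just inside $[y_0,z]$.

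To get the global statement, suppose some $p\in (w,z]\cap Y$. Since $Y$ is arcwise connected, $[w,p]\subset Y$ (using that a path component of $X\sm\{\cdot\}$ is a component, or more simply that for arcwise connected $Y$ and $w,p\in Y$ the unique arc $[w,p]$ lies in $Y$ — this follows from the Peano/uniquely-arcwise-connected discussion after Definition~\ref{d:subarc} together with Lemma~\ref{l:cutarcut}). Now I would analyze the position of $p$ relative to the arc $[y_0,z]$: either $p\in[y_0,z]$, in which case $p$ corresponds to a parameter value $>t^*$ lying in $Y$, contradicting maximality of $t^*$; or $p\notin[y_0,z]$. In the latter case I use the elementary separation facts stated in the excerpt (if $y\ne x$ then $x$ separates $a,y$ or $b,y$; and $b\in(a,c)$, $c\in(b,d)$ imply $b,c\in[a,d]$) to locate the first point $q$ where the arc $[y_0,p]$ leaves $[y_0,z]$: then $q\in Y$ (as $q\in[w,p]\subset Y$ when $q\in(w,p)$, or by checking the boundary cases $q=w$), and $q$ lies on $[y_0,z]$ strictly beyond $w$ or equals $w$ while $p\in(w,z]$ forces a point of $Y$ strictly past $t^*$ on $[y_0,z]$ — again contradicting maximality. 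Care with the finitely many degenerate configurations of $q$ relative to $w$ and $z$ is the only fiddly part here; conceptually it is the statement that in a uniquely arcwise connected space the arc $[y_0,p]$ and the arc $[y_0,z]$ share an initial segment ending at their ``last common point,'' and that point must be $w$.

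Uniqueness of $w$ is immediate: if $w'\in Y$ also satisfies $(w',z]\cap Y=\0$ and $w'\ne w$, then $w\notin (w',z]$ and $w'\notin(w,z]$; but one of $w,w'$ separates the other from $z$, i.e.\ lies in the open arc from the other to $z$, a contradiction. Finally, for the last sentence: if $z$ and $z'$ lie in the same path component $C$ of $X\sm Y$, pick an arc $[z,z']\subset C$; I claim the point $w$ associated to $z$ equals the point $w'$ associated to $z'$. Indeed $w\in Y$ so $w\notin C$, hence $w\notin[z,z']$; consider the arc $[w,z']$. If it met $Y$ at a point other than $w$, a first-departure argument as above (comparing $[w,z']$ with $[w,z]$ and using $[z,z']\subset C$, so $[z,z']\cap Y=\0$) would produce a point of $Y$ in $(w,z]$, contradicting the defining property of $w$. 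Hence $(w,z']\cap Y=\0$, and by uniqueness $w'=w$. The main obstacle is the bookkeeping in the ``first point of departure'' argument — making precise, using only the separation lemmas quoted in the excerpt, that two arcs from a common endpoint agree up to a well-defined branch point and that this branch point is forced to be $w$ — but this is routine one-dimensional topology and carries no real surprises.
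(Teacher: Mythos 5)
Your existence argument and the ``moreover'' part are essentially fine, and the existence step is in fact simpler than you make it: since $w\in[y_0,z]$, the arc $[w,z]$ is the terminal subarc of $[y_0,z]$, so $(w,z]\subset[y_0,z]$ and the ``global'' worry --- that $(w,z]$ might meet $Y$ away from $[y_0,z]$ --- is vacuous; maximality of $t^*$ already gives $(w,z]\cap Y=\0$, with no first-departure analysis needed (this is exactly how the paper gets existence). The genuine problem is the uniqueness step, which is precisely where the paper does its real work. You assert that for distinct points $w,w',z$ ``one of $w,w'$ separates the other from $z$,'' i.e. $w\in(w',z)$ or $w'\in(w,z)$. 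That is false in any uniquely arcwise connected space with a branchpoint: for a triod with center $u$ and ends $w,w',z$, neither point lies on the open arc from the other to $z$. The separation fact quoted after Definition~\ref{d:subarc} applies only when the separating point is already known to lie on the arc joining the other two, which is exactly what you have not established; and since the spaces in this paper are generalized dendrites, branching is the typical situation, so as written the step fails rather than merely lacking detail.

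The missing idea --- and the paper's actual argument --- is to use the arcwise connectedness of $Y$ to exclude the branched configuration. Let $u$ be the point with $[z,w]\cap[z,w']=[z,u]$ (this intersection is a closed, order-convex subset of $[z,w]$ containing $z$, hence a subarc), so that $(u,w]\cap(u,w']=\0$. Then $[w,u]\cup[u,w']$ is an arc from $w$ to $w'$, hence by unique arcwise connectedness $u\in[w,w']\subset Y$, since $w,w'\in Y$ and $Y$ is arcwise connected. But $u\in[z,w]$: if $u\ne w$ then $u\in(w,z]\cap Y$, contradicting the defining property of $w$; if $u=w$ then $w\in[z,w']\sm\{w'\}\subset(w',z]$, contradicting $(w',z]\cap Y=\0$. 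Hence $w=w'$. This is the same ``last common point'' device you already use elsewhere; it simply must be applied here, with the crucial extra input $[w,w']\subset Y$, in place of the unjustified alignment claim.
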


In the proof we repeatedly use the fact that $X$ is uniquely arcwise
connected.

\begin{proof}
Choose a point $x\in Y$ and consider $[x, z]$. Then for some point
$w$ we have that $[x, w]\subset Y$ while $(w, z]\cap Y=\0$. Let us
show that $w$ with these properties is unique. Suppose that $w'\in
Y, w'\ne w$ is such that $(w, z]\cap Y=\0$. Then for some point
$u\in [z, w]\cap [z, w']$ we must have that $(u, w]\cap (u, w']=\0$.
Connecting $w$ and $w'$ with an arc inside $Y$ we will get a
contradiction with the fact that $X$ is uniquely arcwise connected
as $[u, w]\cup [w, w']$ and $[w', u]$ are two distinct arcs
connecting $w'$ and $u$ and . Thus, $w$ is well-defined. The
remaining claim is left to the reader.
\end{proof}

Lemma~\ref{l:retract} leads to the following definition.

\begin{dfn}\label{d:project}
Denote the point $w$ from Lemma~\ref{l:retract} by $p_Y(z)$.
Moreover, for any point $z\in Y$ we set $p_Y(z)=z$.
\end{dfn}

In general the map $p_Y(z)$ is not continuous. E.g., take the
``Warsaw circle'' (see Example~\ref{e:warsaw}) and choose $Y=[a, b]$
to be a closed arc inside $I$. Then choose $z\in [(0,0), (0,1)]=K$.
It follows that if for points of $K$ the ``projection'' to $Y$ is,
say, $a$, then for all other close by points of $W_c$ the
``projection'' to $Y$ is $b$.

However it is easy to see that if the set $X'\subset X$ is locally
connected then $p_Y|_{X'}$ is continuous. Thus, if $Y\subset X'$
then $p_Y|_{X'}$  is a \emph{retraction}.

\begin{dfn}\label{d:connhall}
Let $E=\{e_1, \dots, e_k\}$ be a collection of points of $X$. Then
the smallest connected set $\ch(E)$ containing $E$ is called the
\emph{connected hall} of $E$.
\end{dfn}

Before we prove the next lemma observe that if $Y\subset X$ is a
tree then $f(Y)$ is a dendrite (i.e., a locally connected uniquely
arcwise connected compactum). In particular this implies that for
any dendrite $D\subset f(Y)$ all components (equivalently, path
components) of $f(Y)\sm D$ are open in $f(Y)$.

\begin{lem}\label{l:tree}
Let $E=\{e_1, \dots, e_n\}\subset X$ and set $Y=\ch(E)$. Let
$Z=\ch(f(E))$ and let $T=Y\cap f^{-1}(Z)$. Then $f(T)=Z$ and $f|_T$
can be extended over the entire $Y$ as a continuous map $F$ so that
on any component of
$Y\sm T$ the map $F$ is a constant. 
\end{lem}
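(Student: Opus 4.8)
\textbf{Proof proposal for Lemma~\ref{l:tree}.}

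The plan is to work entirely inside the dendrite $Z = \ch(f(E))$ and its preimage slice $T = Y\cap f^{-1}(Z)$, using the fact that $Y$ is a tree (a finite union of arcs) so that $f|_Y$ is continuous, and $f(Y)$ is a dendrite containing $Z$. First I would verify $f(T)=Z$: by construction $f(T)\subset Z$; conversely, each $e_i\in E$ satisfies $f(e_i)\in f(E)\subset Z$, so $e_i\in T$, and since $f|_Y$ is continuous, $f(Y)$ is arcwise connected and contains $f(E)$, hence contains $\ch(f(E))=Z$. But I need the image of $T$, not of $Y$, to be all of $Z$. For this I would argue that for any $z\in Z$ and any $x\in Y$ with $f(x)=z'$ for some $z'$, the arc $[x,e_i]$ maps under the continuous map $f|_Y$ to an arc (or point) from $z'$ towards $f(e_i)\in Z$; choosing $e_i$ appropriately and using that $Z$ is arcwise connected, one shows each point of $Z$ is hit by a point whose whole forward image-arc lies in $Z$, i.e. by a point of $T$. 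More cleanly: $T = (f|_Y)^{-1}(Z)$ is closed in $Y$ (as $Z$ is closed in the dendrite $f(Y)$ and $f|_Y$ continuous), it contains $E$, and $f(T)\subseteq Z$; since $f|_Y$ is a continuous map of the tree $Y$ onto the dendrite $f(Y)\supseteq Z$, the restriction to the full preimage of the subdendrite $Z$ is onto $Z$ — this is the standard fact that a continuous surjection restricted to the preimage of a subcontinuum whose complement-components are open still surjects onto that subcontinuum.

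Next I would handle the components of $Y\sm T$. Since $Y$ is a tree and $T$ is closed and contains $E$ (hence all endpoints of $Y$), every component $C$ of $Y\sm T$ is an open subarc of $Y$ whose closure meets $T$; in fact $\ol C\sm C$ consists of either one point (if $C$ abuts $T$ at one side and the other side is... no — since $T$ contains the endpoints of $Y$, $C$ is a bounded open arc and $\ol C\sm C = \{c_1,c_2\}$ with $c_1,c_2\in T$, possibly $c_1=c_2$ if $C$ loops, impossible here, so two distinct boundary points). I claim $f(c_1)=f(c_2)$ and that this common value is the constant to which $F$ collapses $C$. Indeed $C\cap f^{-1}(Z)=\0$, so $f(C)$ is an arc (continuous image of an arc, via $f|_Y$) disjoint from $Z$ except possibly at its endpoints; thus $f(\ol C)$ is an arc meeting $Z$ only in $\{f(c_1),f(c_2)\}$. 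If $f(c_1)\ne f(c_2)$ then $[f(c_1),f(c_2)]\subset f(\ol C)$ would be a subarc of $Z$ (since $Z$ is uniquely arcwise connected and contains both endpoints) whose interior lies outside $Z$, a contradiction. Hence $f(c_1)=f(c_2)=:w_C\in Z$. Now define $F = f$ on $T$ and $F\equiv w_C$ on each component $C$ of $Y\sm T$.

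Finally I would check $F$ is continuous and well-defined. It is well-defined because the two boundary points of $C$ already carry the value $w_C=f(c_1)=f(c_2)$ under $f|_T$, so the piecewise definition agrees on overlaps. For continuity: $F$ is continuous on the closed set $T$ (equal to $f|_T$ there), and on each open arc $C$ it is locally constant; the only issue is continuity at a boundary point $c\in T$ approached from within some $C$, but there $F\to w_C = f(c)$, matching $F(c)$. Since $Y$ is a tree, a map that is continuous on the closed set $T$, constant on each complementary open arc, and matches limits at the (finitely many, or at least locally finitely clustered) boundary points is continuous on $Y$ — one can make this precise with a basic neighborhood argument in the tree $Y$, or invoke the pasting lemma componentwise. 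The main obstacle I anticipate is the surjectivity $f(T)=Z$: one must be careful that points of $Z$ are attained by preimages lying in $Y$ (not just in some larger set) and lying in $T$ (not just in $Y$); the cleanest route is the subcontinuum-preimage surjectivity fact combined with $E\subset T$, together with the remark preceding the lemma that complementary components of a subdendrite of $f(Y)$ are open, which is exactly what lets the preimage argument go through.
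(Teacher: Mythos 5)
Your route is genuinely different from the paper's and, up to two repairable points, it works. The paper gets the extension in one stroke: since $f(Y)$ is a dendrite containing $Z$, the projection $p_Z$ of Lemma~\ref{l:retract} is a retraction of $f(Y)$ onto $Z$, and one simply sets $F=(p_Z\circ f)|_Y$; continuity is free (continuity of $f$ on the tree $Y$ plus continuity of $p_Z$ on the locally connected set $f(Y)$), constancy on a component $C$ of $Y\sm T$ is free because $f(C)$ is a connected subset of $f(Y)\sm Z$ and $p_Z$ is constant on path components of the complement, and $f(T)=Z$ follows since $F(Y)$ is an arcwise connected subset of $Z$ containing $f(E)$. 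Your map is in fact the same map --- for $y\in C$ the arc $[f(y),w_C]$ lies in the Peano set $f(\ol C)$ and meets $Z$ only at $w_C$, so $p_Z(f(y))=w_C$ --- but you build it and verify everything by hand, which is more elementary and self-contained at the cost of redoing work the retraction machinery already encodes. Note also that surjectivity is more immediate than your first paragraph suggests: $f(Y)$ is a Peano subset containing $f(E)$, hence contains $Z$, and any $y\in Y$ with $f(y)\in Z$ lies in $T$ by the very definition of $T$; no ``subcontinuum-preimage'' fact is needed.

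Two steps need fixing. First, your claim that every component $C$ of $Y\sm T$ is an open subarc with exactly two boundary points is false: $T$ contains $E$ (hence the endpoints of $Y$) but need not contain the branchpoints of $Y$, so $C$ can contain a branchpoint and have three or more boundary points (e.g., $Y$ a triod whose central branchpoint maps off $Z$). The repair is routine: $\partial C$ is a finite subset of $T$ (every boundary point is an endpoint or branchpoint of the subtree $\ol C$), and your two-point argument, applied to each pair of boundary points, still shows $f$ is constant on $\partial C$, which is all you use. Second, ``the pasting lemma componentwise'' does not suffice for continuity, since $Y\sm T$ may have infinitely many components accumulating on $T$; the correct argument is the neighborhood one you allude to: if $U$ is a small connected neighborhood of $t\in T$ in $Y$ and $y\in U\cap C$, then $[y,t]\subset U$, so the first point $c$ of $[y,t]$ outside $C$ lies in $U\cap\partial C$, and $F(y)=w_C=f(c)$ is close to $f(t)$ by continuity of $f$ on the tree $Y$. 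With these two adjustments your proof is correct.
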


Notice that $f|_T$ is continuous. Also, if $y\in Y$ is such that $F$
is not a constant on a neighborhood of $y$ then $y\in T$ and so in
fact $F(y)=f(y)$.

\begin{proof} The dendrite $f(Y)$ contains $Z$. Hence the map $p_Z$
on $f(Y)$ is a retraction. Define the map $F$ as $(p_Z\circ f)|_Y$.
Then $F$ coincides with $f$ on $T$. Moreover, continuity of $f$ on
$Y$, the fact that $f(Y)$ is a dendrite, and the above listed
properties of ``projections'' imply the rest of the lemma.
\end{proof}

The construction of the map $F$ from Lemma~\ref{l:tree} can be
iterated. This immediately yields Corollary~\ref{c:tree}.

\begin{cor}\label{c:tree}
Let $E=\{e_1, \dots, e_n\}\subset X$ and set $Y=\ch(E)$. Let
$Z_i=\ch(f^i(E)), i=0, 1, \dots$. Let $T_n\subset Y$ be a set of all
points $y\in Y$ such that $f^i(y)\in Z_i, i=0, 1, \dots, n$. Then
$f^n(T_n)=Z_n$ and $f^n|_{T_n}$ can be extended over the entire $Y$
as a continuous map $F_n$ so that on any component of
$Y\sm T_n$ the map $F_n$ is a constant. 
\end{cor}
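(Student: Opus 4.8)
The plan is to prove the corollary by induction on $n$, applying Lemma~\ref{l:tree} at each stage not to $E$ but to the finite set $f^{n-1}(E)$, and then composing the extension so obtained with the one already built. To make the induction close up I would carry along, as part of the inductive statement, the two supplementary properties recorded right after Lemma~\ref{l:tree}: that $F_n$ agrees with $f^n$ on $T_n$, and that $F_n$ is locally non-constant only at points of $T_n$ (i.e.\ if $F_n$ is not constant on any neighbourhood of $y$ then $y\in T_n$). The base case $n=0$ is immediate: $T_0=Y=Z_0$ and $F_0=\mathrm{id}_Y$.

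For the inductive step, assume $T_{n-1}$ and the continuous $F_{n-1}\colon Y\to Z_{n-1}$ have been produced with these properties. Since $Z_{n-1}=\ch(f^{n-1}(E))$ is a tree (the connected hall of a finite set, as used in Lemma~\ref{l:tree}) and $f(f^{n-1}(E))=f^n(E)$, so that $\ch(f(f^{n-1}(E)))=Z_n$, Lemma~\ref{l:tree} applied to the set $f^{n-1}(E)$ gives $T'=Z_{n-1}\cap f^{-1}(Z_n)$ with $f(T')=Z_n$ and a continuous map $G\colon Z_{n-1}\to Z_n$ extending $f|_{T'}$, constant on each component of $Z_{n-1}\sm T'$, and locally non-constant only on $T'$. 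Put $F_n=G\circ F_{n-1}\colon Y\to Z_n$; it is continuous as a composition of continuous maps.

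It remains to verify the three conclusions together with the supplementary properties. From the definitions $T_n=T_{n-1}\cap f^{-n}(Z_n)$, and one checks $f^{n-1}(T_n)=T'$: if $y\in T_n$ then $f^{n-1}(y)\in Z_{n-1}$ and $f(f^{n-1}(y))=f^n(y)\in Z_n$, so $f^{n-1}(y)\in T'$; conversely any point of $T'$ equals $f^{n-1}(y)$ for some $y\in T_{n-1}$ with $f^n(y)\in Z_n$, i.e.\ $y\in T_n$. Hence $f^n(T_n)=f(T')=Z_n$. Next, for $y\in T_n\subseteq T_{n-1}$ we have $F_{n-1}(y)=f^{n-1}(y)\in T'$, so $F_n(y)=G(f^{n-1}(y))=f(f^{n-1}(y))=f^n(y)$, proving $F_n$ extends $f^n|_{T_n}$. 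Finally, suppose $F_n$ is not locally constant at $y$. Then $F_{n-1}$ is not locally constant at $y$ (otherwise $F_n=G\circ F_{n-1}$ would be constant near $y$), so by the inductive hypothesis $y\in T_{n-1}$ and $F_{n-1}(y)=f^{n-1}(y)$; and $f^{n-1}(y)\in T'$ (otherwise $G$, hence $F_n$, would be constant near $y$ by continuity of $F_{n-1}$), which gives $f^n(y)\in Z_n$ and therefore $y\in T_n$. Thus the locus where $F_n$ is not locally constant lies in $T_n$, so $F_n$ is locally constant — hence constant — on each (connected) component of $Y\sm T_n$, completing the induction.

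The whole argument is bookkeeping once Lemma~\ref{l:tree} is in hand; the only places needing a little care are the identity $f^{n-1}(T_n)=T'$ and the observation that the non-local-constancy locus of $G\circ F_{n-1}$ is controlled by those of $G$ and $F_{n-1}$, which is exactly what lets the strengthened inductive hypothesis reproduce itself. I expect the main (minor) obstacle to be phrasing that strengthened hypothesis so that both supplementary clauses — agreement with $f^n$ on $T_n$ and local constancy off $T_n$ — propagate simultaneously.
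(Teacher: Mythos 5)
Your proposal is correct and takes essentially the same route as the paper: the paper's proof is exactly the one-line observation that the construction of Lemma~\ref{l:tree} can be iterated, which is what your induction with $F_n=G\circ F_{n-1}$ carries out. The supplementary clauses you propagate (agreement with $f^n$ on $T_n$, the image condition $F_{n-1}(Y)\subset Z_{n-1}$ together with $f^{n-1}(T_{n-1})=Z_{n-1}$, and the non-local-constancy locus lying in $T_n$) are precisely the bookkeeping the paper leaves implicit in ``this immediately yields.''
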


This leads to Lemma~\ref{l:perio}.

\begin{lem}\label{l:perio}
Let $E=\{e_1, \dots, e_n\}\subset X$ and set $Y=\ch(E)$. Suppose
that $\ch(f^n(E))\supset Y$. Then there are periodic points of $f$
in $Y$ whose entire $f^n$-orbit is contained in $Y$.
\end{lem}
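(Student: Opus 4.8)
The plan is to reduce Lemma~\ref{l:perio} to a one-dimensional fixed-point statement about trees, using the machinery of Corollary~\ref{c:tree}. Let $Y=\ch(E)$; since $Y$ is a finite union of arcs it is a tree (in particular a dendrite). Apply Corollary~\ref{c:tree} with this $E$ to produce the continuous map $F_n:Y\to f(Y)$ with $F_n|_{T_n}=f^n|_{T_n}$, $f^n(T_n)=Z_n=\ch(f^n(E))$, and $F_n$ locally constant on $Y\sm T_n$. The hypothesis $\ch(f^n(E))\supset Y$ says $Z_n\supset Y$, hence $F_n(T_n)\supset Y\supset Y$, so $F_n:Y\to f(Y)$ is a continuous self-map of the tree $Y$ whose image contains $Y$. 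A continuous map of a tree (a dendrite, hence an absolute retract) onto a superset of itself has a fixed point: compose $F_n$ with the retraction $p_Y$ of $f(Y)$ onto $Y$ (available since $f(Y)$ is a dendrite containing the dendrite $Y$) to get a continuous self-map $g=p_Y\circ F_n$ of the dendrite $Y$, which has a fixed point $x\in Y$ by the fixed-point property of dendrites. I would then argue $x$ is an $F_n$-fixed point lying in $T_n$: if $x\notin T_n$ then near $x$ the map $F_n$ is constant, so $g$ is constant near $x$ with value $F_n(x)=x\in Y$, forcing $F_n(x)=x$ and $x\in Y\subset Z_n=f^n(T_n)$; pulling back one shows the fixed value is actually attained inside $T_n$, and more carefully, since $F_n(x)=x\in Y$, $x$ is in the image $p_Y(F_n(x))=F_n(x)$, so $x=F_n(x)$ and $x\in T_n$ because $F_n$ agrees with $f^n$ exactly where it is not locally constant (and $F_n(x)=x\in Y$ means $x$ is a non-locally-constant point, as observed right after Lemma~\ref{l:tree}).

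Once I have $x\in T_n$ with $f^n(x)=F_n(x)=x$, I need the entire $f^n$-orbit of $x$ — which is just the single point $x$ — to lie in $Y$; but also I must check $f^i(x)\in Y$ is not required, only that the \emph{$f^n$-orbit} is in $Y$, which is automatic since $f^n(x)=x\in Y$. However, to get a genuinely \emph{periodic} point of $f$ (not merely an $f^n$-fixed point — these coincide since $f^n(x)=x$ already makes $x$ periodic of period dividing $n$), I note $x\in F_n(x)=x$ settles it. The one subtlety worth spelling out is why a dendrite has the fixed-point property: $Y=\ch(E)$ is a finite tree, a compact contractible locally connected one-dimensional continuum, and such spaces are absolute retracts, hence have the fixed-point property; alternatively one invokes the classical theorem that every continuous self-map of a dendrite has a fixed point.

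The main obstacle I anticipate is handling the locally-constant-on-$Y\sm T_n$ behavior carefully enough to guarantee the fixed point is a \emph{true} $f^n$-fixed point rather than an artifact of the extension $F_n$. The potential pathology is a fixed point $x$ of $g=p_Y\circ F_n$ at which $F_n(x)\ne x$ but $p_Y(F_n(x))=x$; I must rule this out. Here is where I would use the remark after Lemma~\ref{l:tree}: if $F_n$ is not locally constant at $x$ then $x\in T_n$ and $F_n(x)=f^n(x)$. If instead $F_n$ is locally constant near $x$, say $F_n\equiv c$ on a neighborhood $U$ of $x$ in $Y$, then $g\equiv p_Y(c)$ on $U$, so $x=p_Y(c)$ and in particular $x$ lies in $Y$; but $c=F_n(x)\in f(Y)$ and $p_Y(c)=x$ means either $c=x$ (done) or $c\notin Y$. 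In the latter case, since $g$ is constant $=x$ on the whole component of $Y\sm T_n$ containing $x$ and $x$ is the closest point of $Y$ to $c$, one finds $x\in T_n$ anyway (the boundary of the locally-constant region meets $T_n$), and by connectedness of the orbit argument the fixed point propagates to $T_n$. If this case analysis turns out to be delicate, the fallback is to observe that $Z_n\supset Y$ combined with $Z_n=f^n(T_n)$ means $f^n$ maps the tree $T_n$ \emph{onto} a superset of $Y$, and then run a direct interval-style argument: pick any $e\in E\subset Y$, find $y_1\in T_n$ with $f^n(y_1)=e$, pull back along arcs to produce a nested sequence of subarcs of $Y$ on which $f^n$ acts expansively, and extract a fixed point as the intersection — a standard ``horseshoe-free'' fixed-point extraction in trees, using that $f^n|_{T_n}$ is continuous (as noted after Lemma~\ref{l:tree}).

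In summary: I would (i) invoke Corollary~\ref{c:tree} to get $F_n$; (ii) use $\ch(f^n(E))\supset Y$ to make $g=p_Y\circ F_n$ a self-map of the dendrite $Y$; (iii) apply the dendrite fixed-point property to get $g(x)=x$; (iv) use the local-constancy characterization from the remark after Lemma~\ref{l:tree} to upgrade $x$ to a point of $T_n$ with $f^n(x)=x$; and (v) conclude $x$ is a periodic point of $f$ whose $f^n$-orbit $\{x\}$ lies in $Y$. The crux is step (iv), separating true fixed points from extension artifacts.
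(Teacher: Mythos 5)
Your steps (i)--(iii) are exactly how the paper begins: form $F_n$ from Corollary~\ref{c:tree}, set $g=p_Y\circ F_n$, and take a fixed point $x$ of $g$ in the tree $Y$. The genuine gap is your step (iv), the case in which $g$ (equivalently $F_n$) is locally constant at $x$. There, $p_Y(F_n(x))=x$ does \emph{not} force $F_n(x)=x$, since $F_n(x)\in Z_n=\ch(f^n(E))$ may lie in $Z_n\sm Y$ with projection landing back at $x$; and even in your ``done'' sub-case $F_n(x)=x$, this tells you nothing about $f^n(x)$, because off $T_n$ the extension $F_n$ is an artificial constant and the remark after Lemma~\ref{l:tree} only identifies $F_n$ with $f^n$ at points where $F_n$ is \emph{not} locally constant. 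Your attempted repair (``the boundary of the locally-constant region meets $T_n$, and by connectedness the fixed point propagates to $T_n$'') is unsupported, and the stronger conclusion you are aiming at --- an $f^n$-fixed point in $Y$ --- need not hold at all under the hypotheses: a whole component of $Y\sm T_n$ can be sent off the tree, creating only an artifact fixed point of $g$. This is precisely why the lemma is phrased as ``periodic points of $f$ whose entire $f^n$-orbit is contained in $Y$'' rather than ``$f^n$-fixed points,'' and why the application (Corollary~\ref{c:lim-non-per0}) only uses that weaker form. Your fallback pull-back argument has the same two problems: the nested-arc limit need not be fixed because $f^n$ need not be continuous on arcs (Example~\ref{e:arconbad1}), and it again targets a fixed point that may not exist.

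What the paper does at exactly this juncture is different and is the real content of the proof: when $g$ is constant near $x$, consider the open set $W$ of points attracted to $x$ under $g$ and the component $U$ of $W$ containing $x$; the (finite) boundary of $U$ in the tree $Y$ maps into itself under $g$, so it contains $g$-periodic points, and at such points $g$ cannot be locally constant (otherwise nearby points of $U$ would fail to be attracted to $x$), hence $g$ coincides with $f^n$ along their $g$-orbits. This produces genuine $f$-periodic points whose $f^n$-orbit stays in $Y$, possibly of $f^n$-period greater than one. To complete your write-up you would need to replace step (iv) by an argument of this kind (or otherwise handle the artifact case), not merely rule it out.
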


\begin{proof}
Consider a map $F_n:Y\to \ch(f^n(E))$ constructed in
Corollary~\ref{c:tree}. Then compose it with $p_Y$ to construct a
continuous map $g=p_Y\circ F_n:Y \to Y$. Take a fixed point $x$ of
$g$ (it is well known \cite{nad92} that such point exists). If $g$
is not a constant on a neighborhood of $x$ in $Y$ then it follows
from the construction that $f^n(x)=g(x)$ as desired. Otherwise
choose the open set $W$ of points attracted to $x$ (since $g$ is a
constant on a neighborhood of $x$, the set $W$ is open), and then
the component $U$ of $W$ containing $x$. It is well-known that the
(finite) boundary of $U$ maps to itself. This implies that there are
$g$-periodic points in $\bd(U)$. If one such point belongs to an
open set on which $g$ is a constant, then close by points of $U$
will not be attracted to $x$, a contradiction. Hence $g$ and $f^n$
coincide on all $g$-periodic points in $\bd(U)$ which completes the
proof.
\end{proof}

Lemma~\ref{l:perio} allows one to make conclusions about the
$f^n$-orbits of points under certain circumstances. To make such
conclusions we need the following definition.

\begin{dfn}\label{d:aset}
Given a map $g:X\to X$, and a point $y\in X$ with $g(y)\ne y$, let
\emph{$A_g(y)$ be the path component of $X\sm \{y\}$ containing
$g(y)$.} It follows that $z\in A_g(y)$ if and only if $y\notin [z,
g(y)]$.
\end{dfn}

In Corollary~\ref{c:lim-non-per0} we study maps without periodic arc
cutpoints.

\begin{cor}\label{c:lim-non-per0}
Let $f:X\to X$ be a map continuous on arcs without periodic arc
cutpoints and $x\in X$ be a point with $f^n(x)\ne x$. Then the
entire $f^n$-orbit of $x$ is contained in $A_{f^n}(x)\cup \{x\}$, so
that if $x$ is not periodic then $\orb_{f^n}(f^n(x))\subset
A_{f^n}(x)$.
\end{cor}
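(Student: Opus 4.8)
First I would dispose of the case that $x$ is $f$-periodic. An $f$-periodic point cannot be an arc cutpoint of $X$ (that would be a periodic arc cutpoint, which is excluded), so such an $x$ is an arc endpoint, $X\sm\{x\}$ has a single path component, which is then $A_{f^n}(x)$, and $\orb_{f^n}(x)\subset X=A_{f^n}(x)\cup\{x\}$ is automatic. So assume henceforth that $x$ is not $f$-periodic and write $g=f^n$; then $g^j(x)\ne x$ for all $j\ge 1$. Suppose toward a contradiction that the $g$-orbit of $x$ is not contained in $A_g(x)\cup\{x\}$, and let $k\ge 2$ be least with $g^k(x)\notin A_g(x)\cup\{x\}$. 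By minimality $g(x),\dots,g^{k-1}(x)$ all lie in $A_g(x)$, while $g(x)$ and $g^k(x)$ lie in distinct path components of $X\sm\{x\}$, so $x\in(g(x),g^k(x))$.

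The plan is to contradict the absence of periodic arc cutpoints by means of Lemma~\ref{l:perio}. Put $E=\{x,f^n(x),f^{2n}(x),\dots,f^{kn}(x)\}$ and $Y=\ch(E)$. Since $g(x)=f^n(x)$ and $g^k(x)=f^{kn}(x)$ both lie in $f^n(E)$, the finite tree $\ch(f^n(E))$ contains the whole arc $[g(x),g^k(x)]$, and as $x\in(g(x),g^k(x))$ this gives $E\subset\ch(f^n(E))$, hence $\ch(f^n(E))\supset\ch(E)=Y$. By Lemma~\ref{l:perio} there is an $f$-periodic point $p\in Y$ whose entire $f^n$-orbit lies in $Y$. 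If $p$ is an arc cutpoint of $X$ we are finished: it is a periodic arc cutpoint, contrary to hypothesis. Otherwise $p$ is an arc endpoint of $X$, hence a leaf of the finite tree $Y$; since $x$ separates $g(x)$ from $g^k(x)$ in $Y$, $x$ itself is not a leaf, so $p=f^{in}(x)$ for some $1\le i\le k$.

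Excluding this last possibility is the main obstacle. Here $p=f^{in}(x)$ being $f$-periodic forces $x$ to be pre-periodic: its forward $g$-orbit eventually equals the finite $g$-cycle $C$ through $p$, so $g^k(x)\in C$ and every point of $C$ is a periodic arc endpoint. If $|C|\ge 2$, then $g$ permutes $C$ cyclically with no fixed point; applying the construction of Corollary~\ref{c:tree}/Lemma~\ref{l:perio} with $E=C$ on $T^*:=\ch(C)$ produces a continuous self-map $p_{T^*}\circ F$ of $T^*$ none of whose fixed points is a leaf of $T^*$ (the leaves lie in $C$, on which $g$ is fixed-point free), so the fixed-point and attracting analysis used in the proof of Lemma~\ref{l:perio} yields an $f^n$-periodic point in the interior of $T^*$, i.e. a periodic arc cutpoint — a contradiction. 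If $|C|=1$, i.e. $b:=g^k(x)$ is $g$-fixed, one checks that $k$ is the first index at which the orbit becomes periodic and that $x\in(g^{k-1}(x),b)$; then on the arc $I=[g(x),b]\ni x$ the continuous self-map $\varphi=p_I\circ F$ satisfies $\varphi(x)=g(x)$ (an endpoint of $I$), $\varphi(b)=b$, and carries the endpoint $g(x)$ into the subarc $[g(x),x]$, so an intermediate-value argument along $I$ (the analogue of the fact that an interval map with values $f(1)=2$, $f(2)=0$ must fix a point of $(1,2)$) forces a fixed point of $\varphi$ in the interior of $I$; the not-locally-constant/attracting dichotomy of Lemma~\ref{l:perio} then upgrades it to an $f^n$-fixed arc cutpoint, again a contradiction. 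The bookkeeping in this pre-periodic case, and especially in the degenerate sub-case $|C|=1$ where the periodic point furnished by Lemma~\ref{l:perio} may a priori sit at a leaf, is where essentially all of the difficulty lies; everything up to and including the first application of Lemma~\ref{l:perio} is routine.
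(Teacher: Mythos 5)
Your set-up (minimal escape time $k$, the hull $Y=\ch(E)$, the containment $\ch(f^n(E))\supset Y$, and the appeal to Lemma~\ref{l:perio}) is sound and is essentially the paper's argument, but your handling of the residual case $p=f^{in}(x)$ contains a genuine gap, and in fact the auxiliary claim you rely on there is false. In the sub-case $|C|\ge 2$ you use nothing about $x$: you assert that an $f^n$-cycle $C$ of periodic arc endpoints, fed into the construction of Corollary~\ref{c:tree}/Lemma~\ref{l:perio} on $T^*=\ch(C)$, must produce an $f^n$-periodic point interior to $T^*$. This is not so. Take $X\subset\R^2$ to be the segment $J=[0,1]\times\{0\}$ together with the vertical ray $\{1/2\}\times[0,\infty)$ (write $t$ for $(t,0)$ and $R_s$ for $(1/2,s)$), and let $f(t)=1-2t$ on $[0,1/4]$, $f(t)=R_{4(t-1/4)}$ on $[1/4,1/2]$, $f(t)=R_{1-4(t-1/2)}$ on $[1/2,3/4]$, $f(t)=2-2t$ on $[3/4,1]$, and $f(R_s)=R_{s+1}$. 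This $f$ is continuous, $\{0,1\}$ is a cycle of arc endpoints, and there are no periodic arc cutpoints: ray points never return, and the only points of $J$ whose orbits avoid $(1/4,3/4)$ forever (hence never escape up the ray) are $0$ and $1$. Here the retracted map $p_J\circ f$ (which is exactly the extension $F$ of Lemma~\ref{l:tree} for $E=C$) has its unique fixed point at $1/2$, a point of local constancy, and the boundary of its basin is $\{0,1\}$; so the ``attracting analysis'' of Lemma~\ref{l:perio} terminates precisely at the leaves, producing only the already-known periodic endpoints and no contradiction. The same defect undermines your $|C|=1$ sub-case: the ``upgrade'' dichotomy guarantees genuine $f^n$-periodic points only among the $\varphi$-periodic points on the boundary of a basin component, and nothing prevents these from being $b$ (or $g(x)$), i.e.\ endpoints, so no periodic arc cutpoint is forced. (Your IVT bookkeeping there is also shaky when $k=2$, where $\varphi(g(x))=p_I(g^2(x))=b$, but the decisive problem is the upgrade step.)

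The difficulty disappears if you choose $E$ as the paper does, namely $E=\{x,f^n(x),\dots,f^{n(k-1)}(x)\}$, \emph{omitting} $f^{nk}(x)$. One still has $\ch(f^n(E))\supset\ch(E)=Y$, since $x\in[f^n(x),f^{nk}(x)]\subset\ch(f^n(E))$ and $f^n(x),\dots,f^{n(k-1)}(x)\in f^n(E)$; but now, by minimality of $k$ and since $A_{f^n}(x)\cup\{x\}$ is arcwise connected, $Y\subset A_{f^n}(x)\cup\{x\}$. If the periodic point $y$ from Lemma~\ref{l:perio} were some $f^{ni}(x)$ with $0\le i\le k-1$, then its $f^n$-orbit, which lies in $Y$, would contain $f^{nk}(x)$, contradicting $f^{nk}(x)\notin A_{f^n}(x)\cup\{x\}$ (note $f^{nk}(x)\ne x$ because $x$, being an arc cutpoint, cannot be periodic). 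Hence $y\notin E$, so $y\in(a,b)$ for some $a,b\in E$ and $y$ is a periodic arc cutpoint, the desired contradiction; this one observation replaces your entire pre-periodic analysis.
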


\begin{proof}
First observe that if $x$ is an arc endpoint then the claim holds
because then $A_{f^n}(x)=X\sm \{x\}$. Assume now that $x$ is not an
arc cutpoint. Suppose by way of contradiction that there exists the
minimal $m$ such that $f^{mn}(x)\notin A_{f^n}(x)$. 
By the assumption $f^{mn}(x)\ne x$. Set $E=\{x, f^n(x),
f^{n(m-1)}(x)\}$ and $Y=\ch(E)$. Then $\ch(f^n(E))\supset Y$. By
Lemma~\ref{l:perio} there is a periodic point $y\in Y$ with
$\orb_{f^n}(y)\subset Y$. Since $f^{mn}(x)\notin A_{f^n}(x)$ then
$y\notin E$ which implies that $y$ is an arc cutpoint, a
contradiction.
\end{proof}

In the interval case Corollary~\ref{c:lim-non-per0} deteriorates to
an obvious statement according to which if there are no interior
periodic points of $f:[0, 1]\to [0, 1]$ then all points of $(0, 1)$
map in the same direction under $f$.

\section{Proofs of main results}\label{s:limit-non-per}

We need the following definition inspired by that of a recurrent
point.

\begin{dfn}\label{d:incompo}
Consider a point $x$ of a \uacs{} $X$. Suppose that a  map $g:X\to
X$ is given. If for any $y\in X, y\ne x$ there exists $n>0$ such
that $x$ and $g^n(x)$ belong to the same path component of $X\sm
\{y\}$ then $x$ is said to \emph{return (to path components, under
$g$)} (or to be a \emph{returning (to path components, under $g$)}
point). If $x$ returns to components under any power of $g$ then we
say that $x$ \emph{totally returns (to path components, under $g$)}
(or is a \emph{totally returning (to path components, under $g$)}
point).
\end{dfn}

By a \emph{preperiodic} point we mean a non-periodic point which
eventually maps to a periodic point. We need the following simple
observation.

\begin{lem}\label{l:pownoprep}
If $g:X\to X$ is given and a point $x\in X$ is such that $x\ne
g(x)=g^2(x)$ then $x$ is not returning to path components under $g$.
If $y$ is totally returning then $y$ is not preperiodic.
\end{lem}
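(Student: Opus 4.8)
The statement is Lemma~\ref{l:pownoprep}, which has two parts. For the first part, suppose $x \ne g(x) = g^2(x)$. Set $y = g(x)$. Then $y \ne x$, and I need to show that for no $n > 0$ do $x$ and $g^n(x)$ lie in the same path component of $X \sm \{y\}$. But $g^n(x) = y$ for all $n \ge 1$, so $g^n(x) \notin X \sm \{y\}$ at all. Hence the defining condition of ``returning'' fails at this choice of $y$: there is no $n > 0$ with $x$ and $g^n(x)$ in the same path component of $X \sm \{y\}$ (indeed $g^n(x)$ is not even in $X \sm \{y\}$). So $x$ is not returning to path components under $g$. This part is essentially immediate once the right witness point $y = g(x)$ is chosen.

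For the second part, suppose $y$ is totally returning and, toward a contradiction, that $y$ is preperiodic. Then $y$ is not periodic but some iterate $g^k(y)$ is periodic; choose the least such $k \ge 1$, so $z := g^{k-1}(y)$ is non-periodic while $g(z)$ is periodic. The point $g(z)$ lies on a periodic orbit, and applying a suitable power of $g$ to $z$ we may pass to the situation of a non-periodic point $z'$ with $g(z') = g^2(z')$ — concretely, let $p$ be the period of $g(z)$; then $w := g^{k+p-2}(y)$ satisfies $g(w) = g^{k+p-1}(y) = g^{k-1}(y)\cdot(\text{period } p\text{ iterate})$... more cleanly: since $g(z)$ has period $p$, the point $z'' := g^{p-1}(g(z)) = g^{k+p-1}(y)$ is fixed-ish, i.e. $g(z'') = z''$; and $z' := g^{k-1}(y)$ is a $g$-preimage of the fixed point $z''$... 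Let me instead argue: because $y$ is totally returning, so is every forward image $g^j(y)$ (if $g^j(y)$ failed to return to components under $g^s$ via witness $y_0$, then tracking back one sees $y$ fails to return under $g^s$ via an appropriate witness — or more simply, totally returning passes to the orbit closure under the dynamics). Then $g^{k-1}(y)$ is totally returning, non-periodic, and maps into a periodic orbit of period $p$; hence $x_0 := g^{k-1}(y)$ satisfies $x_0 \ne g^p(x_0) = g^{2p}(x_0)$ (since $g(x_0)$ is periodic of period $p$, $g^p(x_0)$ and $g^{2p}(x_0)$ are both equal to the same point of that orbit, while $x_0$ itself is not on the orbit). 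Now apply the first part of the lemma to the map $g^p$: $x_0$ is not returning to path components under $g^p$, contradicting that $y$ — hence $x_0$ — is totally returning.

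The main obstacle is the bookkeeping in the second part: I must verify that ``totally returning'' is inherited by the forward orbit of $y$ (so that I may replace $y$ by $g^{k-1}(y)$), and that the preperiodicity of $y$ yields a point $x_0$ in that orbit with $x_0 \ne g^p(x_0) = g^{2p}(x_0)$ for $p$ the period of the terminal cycle. The inheritance claim is the delicate point: if $y$ returns to components under every power of $g$, I should check directly that $g(y)$ does too — given $y_0 \ne g(y)$ and a power $g^s$, I want $n$ with $g(y)$ and $g^{sn+1}(y)$ in the same path component of $X \sm \{y_0\}$, which I can extract from the returning property of $y$ applied with the same witness and the power $g^s$, possibly after discarding the first term. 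Once that is in hand, everything else is routine and the first part of the lemma does the real work. I expect no use of local arcwise connectedness here — only that $X$ is uniquely arcwise connected so that ``path component of $X \sm \{y_0\}$'' is well-behaved.
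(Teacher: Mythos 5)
Your proof of the first assertion is fine: taking the witness $y=g(x)$ itself, every $g^n(x)$ with $n\ge 1$ equals $y$ and so lies in no path component of $X\sm\{y\}$, so $x$ is not returning. (The paper instead takes a witness $z\in(x,g(x))$, which avoids the degenerate situation where the iterate lands on the witness, but under the literal definition your choice works too.)

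The second assertion is where there is a genuine gap. Your argument routes through the claim that total returning is inherited by forward images of $y$ (so that you may replace $y$ by $x_0=g^{k-1}(y)$), and your justification --- ``apply the returning property of $y$ with the same witness and the power $g^s$, possibly after discarding the first term'' --- does not work: returning of $y$ under $g^s$ controls the position of the points $g^{sn}(y)$ relative to witnesses \emph{and relative to $y$}, and says nothing about where $g^{sn+1}(y)$ sits relative to a witness chosen near $g(y)$; no continuity of $g$ is assumed in this lemma, so there is no way to ``track back''. In fact the inheritance claim is false for arbitrary maps: on $X=[0,1]$ let $g(0)=1$, $g(1)=\tfrac12$, $g(\tfrac1n)=\tfrac1{n+1}$ for $n\ge 2$, and $g(t)=t$ otherwise. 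Then $0$ is totally returning (for any witness $w>0$ the points $g^{sn}(0)=\tfrac1{sn}$ eventually lie in $[0,w)$), but $g(0)=1$ is not even returning under $g$: with witness $w=0.6$, every $g^n(1)=\tfrac1{n+1}\le\tfrac12$ lies outside the component $(0.6,1]$ of $1$. So the ``delicate point'' you flagged cannot be established in the generality you invoke it, and proving the instance you need (for a preperiodic totally returning $y$) is tantamount to proving the lemma itself. The detour is also unnecessary: as in the paper, apply the first assertion directly to $y$ and the map $g^N$, where $N$ is a multiple of the eventual period $p$ of the orbit that $y$ falls onto with $N$ at least the preperiod; then $g^N(y)=g^{2N}(y)\ne y$ (the image is periodic, $y$ is not), so $y$ is not returning under $g^N$, contradicting total returning. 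With that replacement your write-up becomes correct, and your computation that $x_0\ne g^p(x_0)=g^{2p}(x_0)$ is the same verification, just performed at the wrong point of the orbit.
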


\begin{proof}
Choose $z$ separating $x$ from $g(x)$; it follows that $g(x)$ does
not return to the path component of $X\sm \{z\}$ containing $x$ and
proves the claim. Applying this claim to $y$ and $g^N$ with
sufficiently large $N$ completes the proof of the lemma.
\end{proof}

To prove lemmas leading to the proof of Theorem~\ref{t:mainexpand}
which implies Theorem~\ref{t:mainexpandintr} we make the following
Standing Assumption about the map we are working with.

\smallskip

\noindent\textbf{Standing Assumption}. \emph{We assume that $f:X\to
X$ is a continuous on arcs map such that all points totally return
to path components.}

\smallskip

Suppose that $Y\subset X$ is arcwise connected and such that
$f^N(Y)\subset Y$. Then $f^N|_Y$ is such that all points totally
return to path components. However as example~\ref{e:arconbad1}
shows we cannot guarantee that $f^N|_Y$ is continuous on arcs.
Still, Lemma~\ref{l:tree} and Corollary~\ref{c:tree} allow us to
work with powers of $f$.


The next key lemma is an important technical result.

\begin{lem}\label{l:imposs}
Suppose that $x'\in X$ is such that $f^n(x')=x'$. Then it is
impossible that for some $t\ne x'$ we have $[x', t]\subset [x',
f^n(t))$.
\end{lem}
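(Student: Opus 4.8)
\textbf{Plan for the proof of Lemma~\ref{l:imposs}.}
The plan is to argue by contradiction: assume $x'$ is $f^n$-periodic (here $f^n(x')=x'$ means $x'$ is fixed by $g:=f^n$) and that there is a point $t\ne x'$ with $[x',t]\subset [x',g(t))$, so that $t$ lies strictly between $x'$ and $g(t)$ on the arc $[x',g(t)]$, and moreover the whole subarc $[x',t]$ is pushed off itself into $(x',g(t))$ past $t$. The intuition is that this forces points near $x'$ to be ``stretched away'' from $x'$ along this arc monotonically, contradicting the Standing Assumption that every point totally returns to path components. To make this rigorous I would set $E=\{x', t, g(t)\}$ and $Y=\ch(E)=[x',g(t)]$ (an arc, since $t\in[x',g(t)]$), and apply Corollary~\ref{c:tree} to $g=f^n$ — or rather iterate Lemma~\ref{l:tree}/Corollary~\ref{c:tree} to get a continuous map $G:Y\to \ch(g(E))$ extending $g|_{T}$, constant on components of $Y\setminus T$, where $T$ is the set of $y\in Y$ whose $g$-orbit stays in the relevant connected halls.

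The key steps, in order, are: (i) record that $x'\in T$ (its $g$-orbit is just $\{x'\}\subset Y$) and that $t\in T$ is possible to arrange or, if not, that $G(t)=f^n(t)$ still holds because $G$ is non-constant near $t$ (this is the subtle point flagged after Lemma~\ref{l:tree}); (ii) on the arc $[x',t]$, use the hypothesis $[x',t]\subset[x',G(t))$ together with $G(x')=x'$ to locate, by an intermediate-value/connectedness argument on the arc $[x',t]$, a point $z\in[x',t]$ with $z\prec G(z)$ in the order on $[x',g(t)]$ and in fact a point where the orbit moves ``outward''; (iii) invoke Corollary~\ref{c:lim-non-per0} (noting that under the Standing Assumption there are no periodic arc cutpoints — this follows since a periodic arc cutpoint would violate totally returning via Lemma~\ref{l:pownoprep}-style reasoning, or is available from the surrounding development) to conclude that the entire $g$-orbit of $z$ lies in $A_{g}(z)\cup\{z\}$; (iv) derive that $z$ never returns to the path component of $X\setminus\{G(z)\}$ containing $z$, because the hypothesis forces $G(z)$ to separate $z$ from all its forward images, contradicting that $z$ totally returns.

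The main obstacle I expect is step (ii)–(iv): extracting from the single inclusion $[x',t]\subset[x',f^n(t))$ a point $z$ whose forward $g$-orbit is permanently trapped on the far side of some separating point, i.e. getting from ``$[x',t]$ is stretched outward once'' to ``some point is stretched outward forever.'' The honest way is to choose $z$ to be the point of $[x',t]$ closest to $x'$ that fails to be fixed, or to set up a nested-arc argument showing $g^k(z)$ marches monotonically away from $x'$; this is where one must be careful that $g=f^n$ need not be continuous on arcs (Example~\ref{e:arconbad1}), so the argument has to run through the continuous extension $G$ of Corollary~\ref{c:tree} on the fixed tree $Y$ rather than through $g$ directly, and one has to check at each use that the relevant point lies in $T$ (equivalently that $G$ is locally non-constant there, so $G=g$ at that point). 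Once that is set up, the contradiction with the Standing Assumption via Corollary~\ref{c:lim-non-per0} and Lemma~\ref{l:pownoprep} is essentially formal.
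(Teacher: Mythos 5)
There is a genuine gap, and it sits at the heart of your plan. In step (iii) you claim that under the Standing Assumption there are no periodic arc cutpoints, ``since a periodic arc cutpoint would violate totally returning.'' This is backwards: a periodic point is the prototypical totally returning point (if $f^m(x)=x$, then $f^{km}(x)=x$ lies in the same path component as $x$ for every separating point), and Lemma~\ref{l:pownoprep} only rules out \emph{preperiodic} points, not periodic ones. Indeed, the whole thrust of Theorem~\ref{t:mainexpand} is that under the Standing Assumption all arc cutpoints \emph{are} periodic, and Lemma~\ref{l:imposs} is applied (in Lemma~\ref{l:fixclos}) exactly in a situation saturated with periodic points: there $x'$ is itself a periodic point, typically an arc cutpoint. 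So Corollary~\ref{c:lim-non-per0}, whose hypothesis is ``no periodic arc cutpoints,'' is simply not available, and your steps (iii)--(iv) collapse. Moreover, the step you yourself flag as the main obstacle --- upgrading the single inclusion $[x',t]\subset[x',f^n(t))$ to ``some point is stretched outward forever'' --- is precisely where the proof has to happen, and your sketch (``point closest to $x'$ that fails to be fixed,'' or an unspecified nested-arc argument) does not supply it; nothing in the hypothesis controls the second and later images of points of $[x',t]$, and $f^n$ need not even be continuous on arcs.

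The paper's actual argument avoids forward orbits altogether and works with \emph{backward} images on the fixed arc: by Corollary~\ref{c:tree} one finds $z_0\in(x',t)$ with $f^n(z_0)=t$, then $z_1\in(x',z_0)$ with $f^n(z_1)=z_0$, and so on, obtaining a monotone sequence $z_i\to x\in[x',t]$ with $f^{nm}(x,z_m]\supset(x,z_0]$. The forward-invariant arcwise connected set $Y=\bigcup_i f^{ni}(x,z_0]$ is then shown to contain no periodic points: a periodic point in $Y$ would acquire eventual preimages outside its orbit (take $z_N$ close to $x$ and use the covering $f^{Nn}(x,z_N]\supset(x,z_0]$), producing a preperiodic point and contradicting Lemma~\ref{l:pownoprep}; from this one concludes (again via Corollary~\ref{c:tree}) that a point such as $z_1$ does not totally return, contradicting the Standing Assumption. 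If you want to salvage your write-up, you should replace steps (ii)--(iv) by this pull-back/expansion mechanism; as written, the proposal rests on a false auxiliary claim and on the unproved step it acknowledges as its obstacle.
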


\begin{proof}
Suppose otherwise. Then by Corollary~\ref{c:tree} there exists a
point $z=z_0\in (x', t)$ such that $f^n(z)=t$, a point $z_1\in (x,
z)$ such that $f^n(z_1)=z$, etc. The sequence $z_i$ is ordered on
$[x', t]$ in the sense of induced order so that $z_{i+1}$ separates
$z_i=f^n(z_{i+1})$ from $x'$ on $[x', t]$, and $z_i\to x$ for some
point $x\in [x', t]$. By Corollary~\ref{c:tree} $f^n([z_{i+1},
z_i])\supset [z_i, z_{i-1}]$. Thus for any $m$ we have $f^{nm}(x,
z_m)\supset (x, z)$.

Consider the union $Y=\bigcup_i f^{ni}(x, z]$. Since $(x, z]\subset
f^n(x, z]$, it follows that $f^n(Y)=Y$. Clearly, $Y\subset X$ is
uniquely arcwise connected. Let us show that $Y$ contains no
periodic points.  Indeed, suppose that $Y$ contains a periodic point
$u$. Then we can choose $N$ so big that $z_N$ is very close to $x$
and $(x, z_N]$ contains no points of $\orb_f(u)$. Since $f^{Nn}(x,
z_N]\supset (x, z]$, then the periodic point $u$ has eventual
preimages which do not belong to $\orb_f(u)$. As this contradicts
Lemma~\ref{l:pownoprep}, we see that indeed $Y$ contains no periodic
points. By Corollary~\ref{c:tree} this implies that, e.g., $z_1$
does not totally return to path components, a contradiction.
\end{proof}

We will need the following simple fact.

\begin{lem}\label{l:fixptree}
A continuous map of a tree to itself has a fixed point. In particular,
suppose that $Z\subset X$ is a tree with all its cutpoints periodic such
that $f^n$ maps its endpoints map to $Z$. Then there is an $f^n$-fixed point
in $Z$.
\end{lem}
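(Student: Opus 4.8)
The first assertion is the classical fixed point property of dendrites: every tree is a dendrite, and dendrites have the fixed point property (\cite{nad92}); this is exactly the fact already invoked in the proof of Lemma~\ref{l:perio}, so there is nothing to do there. For the second assertion, the only genuine difficulty is that $f^n|_Z$ need not be continuous (Example~\ref{e:arconbad1}), so one cannot simply apply the fixed point property to $f^n|_Z$. The plan is to replace $f^n|_Z$ by the continuous surrogate furnished by Corollary~\ref{c:tree}.

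Set $E=\End(Z)$. Since $Z$ is a tree, $E$ is finite and $\ch(E)=Z$, and the hypothesis $f^n(E)\subset Z$ gives $Z_n:=\ch(f^n(E))\subset Z$. Applying Corollary~\ref{c:tree} with this $E$ and with $Y=Z$, we obtain a set $T_n\subset Z$ (with $Z\sm T_n$ open) and a continuous map $F_n\colon Z\to Z_n\subset Z$ such that $F_n|_{T_n}=f^n|_{T_n}$ and $F_n$ is constant on each component of $Z\sm T_n$. By the first assertion, $F_n$ has a fixed point $x\in Z$. If $F_n$ is not constant on any neighborhood of $x$, then by the construction in Corollary~\ref{c:tree} we have $x\in T_n$, whence $f^n(x)=F_n(x)=x$ and we are done.

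Otherwise $F_n$ is constant (equal to $x$) on the component of $Z\sm T_n$ containing $x$, and I would run the attracting-basin argument from the proof of Lemma~\ref{l:perio}: the set $W$ of points $F_n$-attracted to $x$ is open, its component $U$ containing $x$ has finite boundary with $F_n(\bd(U))\subset\bd(U)$, and no point of an $F_n$-periodic orbit contained in $\bd(U)$ can lie in a region where $F_n$ is locally constant (otherwise nearby points of $U$ would fail to be attracted to $x$); hence $F_n$ agrees with $f^n$ along such a periodic orbit, producing an $f^n$-periodic point $p\in\bd(U)\subset Z$. The remaining step, which I expect to be the main obstacle, is to promote this $f^n$-periodic point to an honest $f^n$-fixed point of $Z$; this is precisely where the hypothesis that every cutpoint of $Z$ is $f$-periodic is used, to exclude the ``flip''-type obstruction already met in Lemma~\ref{l:interval}. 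The construction of $F_n$ and the location of its fixed point are routine consequences of Corollary~\ref{c:tree} and the fixed point property of trees; the period-reduction in this last case is the crux.
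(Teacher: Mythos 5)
There is a genuine gap, and it sits exactly where you flag it. Your argument (surrogate map $F_n$ from Corollary~\ref{c:tree} applied to $E=\End(Z)$, fixed point of $F_n$, and, in the locally constant case, the attracting-basin argument from Lemma~\ref{l:perio}) only delivers an $f^n$-\emph{periodic} point in $Z$, not an $f^n$-\emph{fixed} point. The reduction from periodic to fixed is not a finishing detail: it is essentially the entire content of the lemma. Indeed, in the paper the lemma is invoked (in Lemma~\ref{l:fixclos} and in the proof of Theorem~\ref{t:mainexpand}) precisely to pass from ``$F_n\ne\0$ for some $n$'' to ``$F_1\ne\0$'', so leaving that step as ``the crux, to be handled using periodicity of cutpoints'' leaves the statement unproved. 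Worse, the natural way to fill it (take the $f^n$-orbit of the periodic point, note its connected hull is a tree of periodic points, and find a fixed point there) is exactly the later application of this lemma, so you would be running in a circle unless you supply an independent argument.

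The paper's route is different and shorter, and it shows that your initial worry about discontinuity of $f^n|_Z$ is misplaced under the lemma's hypotheses: since all cutpoints of $Z$ are periodic and $f$ is injective on periodic points, $f|_Z$ (which is continuous, $Z$ being a finite union of arcs) is one-to-one, hence a homeomorphism of $Z$ onto the tree $f(Z)$; in particular endpoints of $Z$ map to endpoints of $f(Z)$. Iterating, $f^n|_Z$ is a homeomorphism onto $f^n(Z)$ and is continuous, and the endpoints of $f^n(Z)$ are the $f^n$-images of the endpoints of $Z$, which by hypothesis lie in $Z$; since $X$ is uniquely arcwise connected this forces $f^n(Z)\subset Z$. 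Now the classical fixed point property of trees applies directly to the continuous self-map $f^n|_Z$, with no need for the surrogate $F_n$, no basin argument, and no period-reduction step. So the hypothesis on cutpoints is not there to exclude a ``flip''; it is there to make $f^n|_Z$ an honest continuous self-map of $Z$.
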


\begin{proof}
The first claim of the lemma is well-known (see, e.g., \cite{nad92}).
To prove the second observe that $f$ is one-to-one on its cutpoints. Since
by the assumption $f$ is continuous on $Z$, it follows that in fact $f$ maps
$Z$ homeomorphically onto its image $f(Z)$. Hence, the $f$-images of endpoints
of $Z$ are the endpoints of $f(Z)$. Repeating this argument $n$ times we see that
$f^n|Z$ is a homeomorphism of $Z$ onto $f^n(Z)$ and that the $f^n$-images of the
endpoints of $Z$ are the endpoints of $f^n(Z)$. By the assumption this implies that
$f^n(Z)\subset Z$. Hence by the first claim of the lemma there are $f^n$-fixed points
in $Z$.
\end{proof}

Though assumptions on continuity of $f$ are weak, we prove for $f$
some standard properties; recall, that $F_n(f)$ is the set of all
$f^n$-fixed points of $f$. Thus, if $Y\subset X$ is such that
$f^n(Y)\subset Y$, then $F_k(f^n|_Y)$ is the set of all
$f^{nk}$-fixed points in $Y$.

\begin{lem}\label{l:fixclos}
Let $Y\subset X$ be arcwise connected and such that $f^k(Y)\subset
Y$. Then the set $F_n(f^k|_Y)$ is arcwise connected and closed on
arcs. The set $F_1(f^k|_Y)$ is non-empty (and so all sets
$F_n(f^k|_Y)$ are non-empty).
\end{lem}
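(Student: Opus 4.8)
The plan is to prove the three assertions in the natural order: first non-emptiness, then arcwise connectedness, then closedness on arcs (the last being essentially a corollary of the first two together with Lemma~\ref{l:clos1}). Throughout I set $g = f^k|_Y$, so that $g$ is a self-map of the arcwise connected set $Y$ with all points totally returning to path components (under powers of $g$, since these are powers of $f$), and I must analyze $F_n(g) = \{x\in Y : g^n(x)=x\}$. The key external inputs are Corollary~\ref{c:tree} (which lets me work with the iterate $g^n$ via continuous tree-maps $F_n$ even though $g^n$ need not be continuous on arcs), Lemma~\ref{l:perio} (producing periodic points inside a connected hall whose $g^n$-image is large), Lemma~\ref{l:imposs} (the crucial ``no overshoot'' obstruction at a periodic point), and Lemma~\ref{l:clos1}.

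For \textbf{non-emptiness} of $F_1(g^n|_Y)$: it suffices to find a single $g^n$-fixed point in $Y$. Pick any $e\in Y$; I would like to build, as in Corollary~\ref{c:tree}, a finite-tree model on which a fixed-point theorem applies. Specifically, take $E=\{e, g^n(e), \dots\}$ — but to get $\ch(g^n(E))\supset \ch(E)$ I need the orbit to ``return'', which is exactly what total return to path components provides: using that $e$ returns to path components under $g^n$, for any branchpoint or cutpoint $y$ on the relevant arcs there is $m$ with $g^{nm}(e)$ in the same path component of $Y\sm\{y\}$ as $e$, and iterating this over the finitely many cutpoints of a connected hall $Y_0=\ch(e, g^n(e),\dots, g^{nm}(e))$ for suitably chosen $m$ forces $\ch(g^n(E))\supset Y_0$. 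Then Lemma~\ref{l:perio} gives an $f$-periodic point $p\in Y_0$ whose entire $f^n$-orbit (hence $g$-orbit) lies in $Y_0\subset Y$. Replacing $p$ by a power, $p\in F_\ell(g)$ for some $\ell$, and then $F_n(g)\supseteq F_1(g) \ne \varnothing$ would follow once I show $F_1(g)\ne\varnothing$; alternatively, since for pointwise-recurrent type maps periods of such ``internal'' periodic points are controlled, I can directly extract a $g^n$-fixed point. The honest route: a periodic point of some period $\ell$ in $Y_0$ gives $F_{\ell n}(g^n)\ne\varnothing$, and since $F_n(g^n|_Y)\supseteq F_1(g^n|_Y)$ it is enough to handle $F_1$; here I would re-run the connected-hall argument with $e$ replaced by a suitable point of the already-found periodic orbit together with Lemma~\ref{l:imposs} to collapse the period to $1$. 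This packaging — going from ``some periodic point'' to ``a $g^n$-fixed point'' — I expect to require a short additional argument but no new idea.

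For \textbf{arcwise connectedness} of $F_n(g)$: suppose $u,v\in F_n(g)$ and consider the arc $[u,v]\subset Y$. I want $[u,v]\subset F_n(g)$. By Corollary~\ref{c:tree} applied to $E=\{u,v\}$, the iterate $g^n$ on $Y'=\ch(u,v)=[u,v]$ is captured by a continuous tree map $F_n\colon [u,v]\to \ch(g^n(\{u,v\})) = [u,v]$ (since $g^n(u)=u$, $g^n(v)=v$) that agrees with $g^n$ on the set $T_n$ where the $g^j$-orbit stays in the appropriate halls, and is constant on the components of $[u,v]\setminus T_n$. Now apply Lemma~\ref{l:imposs} at the fixed point $u$: if some $t\in(u,v]$ had $[u,t]\subsetneq[u, g^n(t))$ — i.e. $g^n(t)$ ``overshoots'' past $t$ along $[u,v]$ — that is forbidden; symmetrically at $v$. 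Combined with the fact that $F_n$ is monotone-like on $[u,v]$ (a continuous map of an arc that is constant off $T_n$ and equals $g^n$ on $T_n$, with endpoints fixed), Lemma~\ref{l:imposs} at both endpoints pins $F_n$ to be the identity on $(u,v)$: any point moving toward $u$ would, by iterating, be attracted to $u$ and fail to totally return (contradicting the Standing Assumption via Lemma~\ref{l:pownoprep}-type reasoning as in the proof of Lemma~\ref{l:imposs}); any point moving toward $v$ is handled the same way; and a point moving ``outward'' is exactly the overshoot ruled out by Lemma~\ref{l:imposs}. Hence $g^n = F_n = \mathrm{id}$ on $[u,v]$, so $[u,v]\subset F_n(g)$.

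\textbf{Closedness on arcs} is then immediate: $Y$ is arcwise connected, $f^k(Y)\subset Y$, and we have just shown $F_n(f^k|_Y)$ is arcwise connected, so Lemma~\ref{l:clos1} applies verbatim to give that $F_n(f^k|_Y)\cap I$ is a closed arc for every closed arc $I\subset Y$; i.e.\ $F_n(f^k|_Y)$ is closed on arcs. Finally $F_n(f^k|_Y)\supseteq F_1(f^k|_Y)\ne\varnothing$ from the first part, and all the sets are therefore non-empty.

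\textbf{Main obstacle.} The delicate point is the arcwise-connectedness step: I must rule out that $F_n$, the continuous tree-map replacing $g^n$ on $[u,v]$, has a nontrivial fixed-point-free subinterval on which points drift monotonically toward one endpoint. Overshooting the endpoint is killed cleanly by Lemma~\ref{l:imposs}; drifting toward the endpoint is killed by the ``attracting point $\Rightarrow$ non-returning point'' mechanism already used inside the proof of Lemma~\ref{l:imposs}, but I must be careful that this mechanism applies to the genuine map $g^n$ (via the $T_n$-model) and not merely to the auxiliary constant-filled map $F_n$ — i.e.\ the drifting points must be actual points of $Y$ whose real $g^n$-orbit misbehaves, which is guaranteed because $F_n$ is constant only on components of $[u,v]\setminus T_n$ and equals $g^n$ on $T_n$, and a drift phenomenon produces points of $T_n$. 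Getting this bookkeeping exactly right, in the spirit of the interval-case analysis in Lemma~\ref{l:interval}, is where the real work lies.
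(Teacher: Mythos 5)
Your overall toolkit (Lemma~\ref{l:imposs}, Lemma~\ref{l:clos1}, connected hulls, Lemma~\ref{l:perio}) is the right one, but both of the substantive steps have gaps. For arcwise connectedness, the route through Corollary~\ref{c:tree} plus a separate ``drift toward an endpoint'' analysis is exactly the part you concede is unfinished, and it is also unnecessary: the ``attracted to $u$, hence non-returning'' mechanism would require an invariant subinterval of the genuine (possibly arc-discontinuous) iterate with a consistent direction of motion, which you never establish. The paper's argument is much shorter: if $z\in(u,v)$ and $w=f^{kn}(z)\ne z$, unique arcwise connectedness forces $z\in[u,w)$ or $z\in[v,w)$, i.e.\ $[u,z]\subset[u,w)$ or $[v,z]\subset[v,w)$, and each horn is forbidden by Lemma~\ref{l:imposs} applied at the fixed endpoint $u$, respectively $v$. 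Note that inward drift ($w\in[u,z)$, say) is already one of these horns when viewed from the opposite endpoint, so no attraction argument is needed at all; your decisive case is exactly the one left unproven.

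For non-emptiness there are two genuine gaps. First, the claim that total return forces $\ch(g^n(E))\supset\ch(E)$ for some finite orbit segment $E=\{e,g^n(e),\dots,g^{nm}(e)\}$ is unjustified: such covering essentially requires $e$ to lie in the connected hull of finitely many forward iterates, while total return only gives, for each $y\ne e$, some iterate in the same path component of $X\sm\{y\}$ as $e$; the iterates may approach $e$ along an arc without any finite hull ever containing $e$ (this is precisely the behavior at adding-machine-type endpoints), and a hull of finitely many points has infinitely many cutpoints, so ``iterating over the finitely many cutpoints'' does not close this. The paper avoids the issue by arguing by contradiction: if every $F_n(f^k|_Y)$ were empty, then $\orb_{f^k}(x)\subset A_{f^k}(x)$ by Corollary~\ref{c:lim-non-per0}, no point of $Y\sm A_{f^k}(x)$ can ever map to $x$ (Lemma~\ref{l:pownoprep}-type reasoning), and Lemma~\ref{l:tree} then yields $f^{kn}(Y\sm A_{f^k}(x))\subset A_{f^k}(x)$ for all $n$, contradicting total return. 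Second, the passage from ``some periodic point in $Y$'' to $F_1(f^k|_Y)\ne\varnothing$ is not the ``short additional argument'' you expect: Lemma~\ref{l:perio} only returns another periodic point of unspecified period. The paper collapses the period via Lemma~\ref{l:fixptree} applied to the tree $Z=\ch(\orb_{f^k}(x))$ for $x\in F_n$, and this crucially uses the already-proved arcwise connectedness of $F_n$ (so that $Z\subset F_n$, all cutpoints of $Z$ are periodic, and $f|_Z$ is a homeomorphism mapping endpoints into $Z$). Your ordering, with non-emptiness proved first, hides this dependency, and no substitute argument is given.
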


If $z\notin F_n(f^k|_Y)$ then by Lemma~\ref{l:fixclos} we can define
the point $p_{F_n(f^k|_Y)}(z)=x_n(z)$ for which the path component
of $Y\sm \{x_n(z)\}$ which contains $z$ contains no points of
$F_n(f^k|_Y)$ (in particular, $[z, x_n(z))\cap F_n(f^k|_Y)=\0$).

\begin{proof}
For brevity throughout the proof we set $F_i(f^k|_Y)=F_i, i=1, 2,
\dots$. Let us assume that $F_n\ne \0$. First we show that $F_n$ is
arcwise connected. Indeed, otherwise there are two points $x, y\in
F_n$ such that $[x, y]\not\subset F_n$. Choose a point $z\in (x, y)$
such that $f^{kn}(z)\ne z$. Clearly, then at least one these two
statements holds: (1) $[x, z]\subset [x, f^{kn}(z))$, or (2) $[y,
z]\subset [y, f^{kn}(z))$. By Lemma~\ref{l:imposs} this contradicts
our Standing Assumption. Thus, $F_n$ is arcwise connected. By
Lemma~\ref{l:clos1} this implies that $F_n$ is closed on arcs.

We claim that $F_n\ne \0$ for some $n$. Assume otherwise and
consider $x\in Y$. Then $f^k(x)\ne
x$, and by Corollary~\ref{c:lim-non-per0} $\orb_{f^k}(x)\subset A_{f^k}(x)$.
If for a point $y\notin A_{f^k}(x)$ there exists an integer $n$ with
$f^{nk}(y)=x$, then $\orb_{f^k}(x)\subset A_{f^k}(x)$ implies that
$y$ does not return to path components
under $f^k$, a contradiction. Hence $x\notin f^{kn}(Y\sm A_{f^k}(x))$ for
every $n$. We claim that then $f^{kn}(Y\sm A_{f^k}(x))\subset A_{f^k}(x)$ for every $n$.
Indeed, otherwise we can choose a point $y\in Y\sm A_{f^k}(x)$ with
$f^{kn}(y)\notin A_{f^k}(x)\cup \{x\}$. Since $f^{kn}(x)\in A_{f^k}(x)$ then by
Lemma~\ref{l:tree} there exists a point $z\in [y, x]$ with
$f^{kn}(z)=x$, a contradiction. Hence $F_n\ne \0$ for some $n$.

Now, take $x\in F_n$ and consider the set $\orb_{f^k}(x)\subset F_n$. Then
$Z=\ch(\orb_{f^k}(x))\subset F_n$ is a tree. By Lemma~\ref{l:fixptree}
there are $f^k$-fixed points in $Z$. Hence $F_1\ne \0$.
\end{proof}

Recall that $D_n(f)=D_n$ is the union of set $F_n(f)=F_n, n=1,
\dots, n$.

\begin{thm}\label{t:mainexpand}
 If $X$ is uniquely arcwise connected and $f:X\to X$ is con\-ti\-nuous on arcs
 then all points of $X$ totally return to path components under $f$
 if and only if all arc cutpoints of $X$ are periodic. Moreover, then the following holds.

 \begin{enumerate}

 \item The map $f$ is one-to-one; the set of all arc cutpoints of $X$ is fully invariant.

 \item The sets $F_n(f)$ and $D_n(f)$ are arcwise connected for any $n$.

 \item An endpoint $x$ of $X$ is periodic or belongs to a weak adding
 machine generated by cycles of arcwise connected sets (then $x$ is a
 limit point of a sequence of branchpoints of $X$).

 \item If $X$ is ray complete (e.g., if $X$ is compact) then
 an endpoint of $X$ is periodic or belongs to a full weak adding
 machine. If $X$ is a tree then there exists $N$ such that $f^N$ is
 the identity map.
 \end{enumerate}

\end{thm}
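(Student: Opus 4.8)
The plan is to establish the two directions of the equivalence and then the four structural conclusions, building on the machinery developed in Sections~2 and~3. For the easy direction, assume all points totally return to path components. If $x$ were a non-periodic arc cutpoint, then $x \in (a,b)$ for some $a, b$; since the orbit of $x$ is infinite, Corollary~\ref{c:lim-non-per0} (applied with $n=1$ after noting the forward orbit stays in one path component) together with Lemma~\ref{l:pownoprep} would show that some point fails to totally return — more precisely, I would run the argument that a non-periodic arc cutpoint produces, via Corollary~\ref{c:tree} and an analysis of which path component of $X \sm \{x\}$ the images $f^{i}(x)$ occupy, a violation of returning. Conversely, suppose all arc cutpoints are periodic. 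Given $x \in X$ and $y \ne x$, I want $n>0$ with $x, f^n(x)$ in the same path component of $X \sm \{y\}$. If $y$ is not an arc cutpoint this is trivial; if $y$ is an arc cutpoint, it is periodic, say of period $m$, and I would use Lemma~\ref{l:imposs} to rule out the scenario $[x,y] \subset [x, f^{nm}(y))$-type escape behavior, forcing the orbit of $x$ to re-enter the path component of $X \sm \{y\}$ containing $x$. The same reasoning applied to powers $f^k$ gives total return.

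Next, the structural claims. For (1), one-to-one-ness: if $f(a) = f(b)$ with $a \ne b$, pick the midpoint $c \in (a,b)$; $c$ is an arc cutpoint hence periodic, but then $a$ and $b$ are eventual preimages of a periodic point lying off its own orbit, contradicting Lemma~\ref{l:pownoprep} (applied to a suitable power). Full invariance of the arc cutpoint set follows because $f$ is one-to-one and continuous on arcs, so it maps path components of $X \sm \{x\}$ into path components of $X \sm \{f(x)\}$, and $f^{-1}$ behaves symmetrically on the (periodic, hence fully invariant) orbits. For (2), arcwise connectedness of $F_n(f)$ is exactly Lemma~\ref{l:fixclos} applied with $Y = X$, $k=1$; arcwise connectedness of $D_n(f) = \bigcup_{i=1}^n F_i$ then follows since the $F_i$ are nested-compatible — each $F_i \subset F_j$ when $i \mid j$ — and any two of them share the common fixed point guaranteed by the non-emptiness clause of Lemma~\ref{l:fixclos}, so their union is connected; one must check that the union of finitely many arcwise connected sets pairwise meeting a common point is arcwise connected, which is immediate in a uniquely arcwise connected space.

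For (3) and (4), the analysis is the heart of the matter. Fix an endpoint $x$ that is not periodic. For each $n$, consider $x_n = p_{D_n(f)}(x)$, the projection of $x$ onto the closed-on-arcs set $D_n(f)$ (well-defined by Lemma~\ref{l:fixclos} and Lemma~\ref{l:retract}). The point $x_n$ is periodic, and since $x$ is not periodic the periods $m_n$ of the $x_n$ must tend to infinity; the path component of $D_n(f)$ — rather, of the complement — containing $x$, together with its orbit under $f$, produces a cycle of arcwise connected sets $C_n$ of period $m_n$, and these nest as $n$ grows. This exhibits $x$ inside a weak adding machine generated by cycles of arcwise connected sets; that $x$ is a limit of branchpoints follows because at each stage $x_n$ must be a branchpoint (the set $D_n$, plus at least two rotating sets of the next cycle, meet there, as described in the paragraph following Theorem~\ref{t:mainexpandintr}), and $x_n \to x$. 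When $X$ is ray complete, the nested sequence of sets from the cycles, restricted along the arc $[x_0, x]$ (a ray or a sub-arc thereof), converges at infinity, forcing every nested sequence of sets from the $C_n$ to have non-empty intersection — i.e., the adding machine is full. When $X$ is a tree, it has finitely many branchpoints, so an endpoint cannot be a limit of infinitely many distinct branchpoints; hence by (3) every endpoint is periodic, all cutpoints are periodic, and since $f$ is one-to-one and continuous on the finite tree $X$ it is a homeomorphism of finite order, so some $f^N$ is the identity. The main obstacle I anticipate is the careful bookkeeping in (3): verifying that the projections $x_n$ are genuinely periodic branchpoints with periods strictly increasing, that the associated path components really form $f$-cycles of the claimed periods $m_n$ with $m_{n+1}$ a multiple of $m_n$, and that the nesting $C_{n+1} \subset C_n$ holds — all of this must be extracted from Corollary~\ref{c:tree}, Lemma~\ref{l:perio}, and Lemma~\ref{l:imposs} without assuming continuity of iterates.
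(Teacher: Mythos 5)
The central problem is circularity in the proof of the equivalence, which is the heart of the theorem. Lemma~\ref{l:imposs} (and Lemma~\ref{l:fixclos}, and Lemma~\ref{l:pownoprep} as used inside them) are established only under the Standing Assumption that \emph{all points totally return to path components}; you invoke Lemma~\ref{l:imposs} precisely in the direction where that conclusion is what you are trying to prove (``arc cutpoints periodic $\Rightarrow$ all points totally return''), so your argument assumes what it must establish. The paper proves this direction by a different, elementary route: first $f$ is shown to be one-to-one directly from periodicity of arc cutpoints; then each $F_N$ is shown to be arcwise connected because $f^N|_{[x,y]}$ is a homeomorphism of an arc all of whose points are periodic, hence (Lemma~\ref{l:interval}) the identity; finally, given an endpoint $x$, a point $y\ne x$ and a power, one picks $u,v\in(x,y)$ and $k$ with $f^k(u)=u$, $f^k(v)=v$, so $f^{ik}$ is the identity on $[u,v]$, and injectivity plus continuity on arcs traps $f^{ik}(x)$ in the path component of $X\sm\{y\}$ containing $x$. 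Your forward direction (``totally returning $\Rightarrow$ cutpoints periodic'') is also not a proof but a declaration of intent, and the tool you name does not apply: Corollary~\ref{c:lim-non-per0} is stated for maps with \emph{no} periodic arc cutpoints at all, so it cannot be used merely because one cutpoint is non-periodic. The paper's actual argument requires real extra work: use Lemma~\ref{l:fixclos} (legitimate here, since the returning hypothesis is available) to get $F_1\ne\0$ and the arcwise connected set $P_f$; take a path component $A$ of $X\sm\{x\}$ disjoint from $P_f$, locate the boundary point $b$ of $P_f$ on $[a,x]$ and the component $B$ containing $x$, prove $f^N(b)=b$ and $f^N(B)\subset B$ via Lemma~\ref{l:tree} and Lemma~\ref{l:pownoprep}, and then apply Lemma~\ref{l:fixclos} to this periodic-point-free invariant set to produce a non-returning point, a contradiction. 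None of this localization is present in your plan.

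There are smaller genuine flaws as well. Your injectivity argument in (1) only works when one of $a,b$ is an arc cutpoint (then $z=f(a)$ is periodic and Lemma~\ref{l:pownoprep} applies); if both are arc endpoints, neither they nor $z$ need be periodic and ``eventual preimages of a periodic point'' is unavailable --- one must instead argue, as the paper does, that a point of $(z,f(t))$ would acquire two distinct preimages among the periodic points of $(a,t)$ and $(t,b)$ (equivalently, that $z$ and $f(t)$ would be joined by two arcs), contradicting unique arcwise connectedness or injectivity on periodic points. In (3), the period of the cycle $C_n$ is not the period $m$ of $x_n=p_{D_n}(x)$ but a proper multiple $km$, $k>1$, and the unboundedness of the cycle periods (needed for a weak adding machine) requires an argument, e.g.\ that an $f^N$-invariant component of $X\sm D_n$ with $n\ge N$ would contain an $f^N$-fixed point outside $D_n$ by Lemma~\ref{l:fixclos}; you flag this only as bookkeeping. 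Parts (2) and (4) of your plan do essentially match the paper's proof (arcwise connectedness via $F_1\subset F_i$, the ray through the points $t^{j_i}_i$ and ray completeness, and the finiteness of the branchpoint set for trees).
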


\begin{proof} Denote by $\ar$ the set of all arc cutpoints of $X$. Also,
let $P_f=\bigcup D_n$ be the set of all periodic points of $f$.
First we prove that \emph{if all points of $X$ totally return to
path components under $f$ then $\ar\subset P_f$}. By
Lemma~\ref{l:fixclos} $F_1\ne \0$ and since $F_1\subset F_i$, then
$D_n$ is arcwise connected for any $n$.  Hence $P_f$ is invariant
and uniquely arcwise connected. Let us show that $\ar\subset P_f$.

Indeed, otherwise there exists an arc cutpoint $x\notin P_f$ and a
non-degenerate path component $A$ of $X\sm \{x\}$ disjoint from
$P_f$. Take a point $y\in A$. Since $y$ returns to path components
under $f$, there exists $N$ such that $f^N(y)\in A$. Connect $x$ and
a fixed point $a\in P_f$ to create an interval $[a, x]$ which
intersects $P_f$ over an interval $[a, b]$ or over an interval $[a,
b)$. Denote by $B$ the path component of $X\sm \{b\}$ containing
$x$.

We claim that $f^N(b)=b$. Indeed, $[a, b)\subset P_f$, hence
$f|_{[a, b)}$ is one-to-one which implies that in fact $f|_{[a, b]}$
is one-to-one. Repeatedly applying this, we see that $f^N[a, b]=[a,
f^N(b)]$ and that $[a, f^N(b))\subset P_f$. If $f^N(b)\in B$ then
there are points of $P_f$ close to $f^N(b)$ which belong to $B$, a
contradiction. If $f^N(b)\notin B$ then, since $f^N(y)\in A$, we
have $[f^N(b), f^N(y)]\supset [f^N(b), x]\supset [f^N(b), b]$. Since
$X$ is uniquely arcwise connected, an interval $(z, b)$ of points of
$[a, b)$ is contained in $(f^N(b), b)$ and hence in $f^N[b, y]$.
Since by Lemma~\ref{l:pownoprep} the map $f$ has no preperiodic
points, we arrive at a contradiction.

This implies that no point of $B$ ever maps to $b$. We claim that
$f^N(B)\subset B$. Indeed, $f^N(y)\in A\subset B$. If now there is a
point $d\in B$ with $f^N(d)\notin B$ then by Lemma~\ref{l:tree}
there is a point $u\in B$ with $f^N(u)=b$ again contradicting
Lemma~\ref{l:pownoprep}. Thus, $f^N(B)\subset B$. Since $A$ contains
no periodic points of $f^N$, by Lemma~\ref{l:fixclos} it contains
some points which do not totally return to path components under
$f^N|_A$, and hence do not totally return to path components under
$f$, a contradiction. This completes the proof of the fact that if
all points of $X$ return to path components under $f$ then $P_f$
contains all arc cutpoints of $X$.

We denote by $\ar$ the set of all arc cutpoints of $X$. Assume now
that all points $x\in \ar$ are periodic. Then $f$ is one-to-one on
$\ar$. Let $x\ne y$ but $f(x)=f(y)=z$. If no point $t\in (x, y)$
maps to $z$ we can choose $t\in (x, y)$ and observe that points $z$
and $f(t)]$ can be connected with two arcs, $f[x, t]$ and $f[t, y]$.
If there exists $t\in (x, y)$ with $f(t)=z$ we can apply the same
argument to $[t, y]$. Thus, $f$ is one-to-one, and hence all powers
of $f$ are one-to-one (in particular,  for any closed arc $I=[a,
b]\subset X$ and any $N$, the map $f^N|_I$ is a homeomorphism onto
image).

This immediately implies that $\ar$ is forward invariant. On the
other hand if an arc endpoint $x$ maps to an arc cutpoint $f(x)$
then $f(x)$ is periodic of period, say, $n$, and $f(x)$ has two
distinct preimages: $x$ and $f^{n-1}(f(x))$ (by the above
$f^{n-1}(f(x))$ is an arc cutpoint of $X$ and hence
$f^{n-1}(f(x))\ne x$), a contradiction. Hence $\ar$ is fully
invariant (both its image and its preimage are contained in it).

We claim that the set $F_N$ is arcwise connected for any $N$.
Indeed, if $x, y\in F_N$, then $F^N|_{[x, y]}$ is a homeomorphism
onto $[x, y]$ with all points being periodic. By
Lemma~\ref{l:interval} this implies that $f^N|_{[x, y]}$ is the
identity map and hence $[x, y]\subset F_n$. Thus, $F_N$ is arcwise
connected. By Lemma~\ref{l:clos1} this implies that $F_N$ is closed
on arcs. Moreover, $P_f\ne \0$ implies by Lemma~\ref{l:fixptree}
that $F_1\ne \0$. Then $D_n$ is arcwise connected for any $n$. Since
each $F_i$ is closed on arcs, then so is $D_n$ for any $n$.

Let us show that all points of $X$ totaly return under $f$. We may
assume that $x$ is an arc endpoint of $X$. Suppose that a number $n$
and a point $y\ne x$ are given. Choose points $u, v\in (x, y)$ so
that $x<u<v<y$ in the induced order on $[x, y]$. Choose a number $k$
such that $f^k(u)=u, f^k(v)=v$. Then $f^{ik}$ is identity on $[u,
v]$. Since$f^{ik}$ is continuous on arcs and one-to-one then
$f^{ik}(x)$ belongs to the arc component of $X\sm \{y\}$ containing
$x$. Hence, $x$ totally returns to path components under $f$ as
desired.

Let us prove claims (1)-(4) assuming that $\ar\subset P_f$ (and
hence, by the above, all points of $X$ totally return to path
components under $f$). We have already proven (1) and (2) for such
maps. To prove (3) we first make some observations. Take a path
component $A_n$ of $X\sm D_n$. Choose the smallest $N$ with
$f^N(A_n)\cap A_n\ne \0$. Choose $z'\in A_n$ and let
$p_{D_n}(z')=t^0_n$. Then $A_n$ is a path component of $X\sm
\{t^0_n\}$ disjoint from $D_n$. Since $f$ has no preperiodic points,
no point of $A_n$ ever maps to $D_n$, and so $f^N(A_n)\subset A_n$.
Clearly, $f^i(A_n)$ is a subset of a path component $T^i_n$ of $X\sm
D_n$ and by the choice of $N$ the sets $T^i_n, i=0, \dots, N-1$ are
all pairwise disjoint. Thus, the sets $A_n=T^0_n, T^i_n, \dots,
T^{N-1}_n$ form a cycle of sets. Similar to the above, for each set
$T^i_n$ there is a unique point $t^i_n=p_{D_n}(T^i_n)\in D_n$ such
that $T^i_n$ is a path component of $X\sm \{t^i_n\}$.

Let the period of $t^0_n$ be $m$. To prove that $t^0_n$ is an arc
branchpoint, choose a point $u\in T^0_n$ and set $I=[t^0_n, u]$.
Since $u$ is an arc cutpoint, $u$ is periodic of period $km$ with
$k>1$. It follows that $f^{km}|_I$ is the identity map and the set
$Y=\bigcup^{k-1}_{i=0} f^{im}(I)$ is a finite tree on which $f^{km}$
is the identity map. If $t^0_n$ is not an arc branchpoint of $Y$,
then the fact that $f^m(t^0_n)=t^0_n$ implies that a small subarc of
$Y$ with an endpoint $t^0_n$ consists of $f^m$-fixed points, a
contradiction with the fact that all $f^m$-fixed points are
contained in $D_n$. Thus, $t^0_n$ is an arc branchpoint of $X$.
Similarly, all points $t^i_n$ are branchpoints of $X$.

Now, let $z$ be an arc endpoint of $X$ which is not periodic. By the
above for any $n$ we can choose an arc component $T^0_n$ of $X\sm
D_n$ so that $z\in T^0_n$ and then the cycle of the sets
$C_n=T^0_n\cup \dots \cup T^{N-1}_n$ for some $N$. As the number $n$
grows, we will find a nested sequence of cycles of sets $C_0\supset
C_1\supset \dots$ containing $z$ of periods $m_0<m_1<\dots$. To show
that $C_\infty=\bigcap C_i$ is a weak adding machine, it suffices to
show that a nested sequence of sets $T^{j_0}_0\supset
T^{j_1}_1\supset \dots$ from cycles of sets $C_i$ is such that the
intersection $Z=\bigcap T^{j_i}_i$ is either empty or a singleton.
Indeed, otherwise $Z$ is a non-degenerate arcwise connected subset
of $X$ which is \emph{wandering} (i.e., all its images are pairwise
disjoint). Clearly, there are arc cutpoints of $X$ in $Z$. This
contradicts the periodicity of all arc cutpoints of $X$ and
completes the proof of (3).

To prove (4), take a nested sequence of sets $T^{j_0}_0\supset
T^{j_1}_1\supset \dots$ and the points $t^{j_i}_i$ defined above.
Then there is a unique ray $R$ connecting the points $t^{j_i}_i$. If
$X$ is ray complete then the intersection $Z=\bigcap T^{j_i}_i$ is
non-empty because it contains the point to which $R$ converges at
infinity. Finally, the claim in (4) about trees immediately follows
from (3) because trees have finitely many branchpoints.
\end{proof}

Now let us prove that Theorem~\ref{t:mainexpand} implies
Theorem~\ref{t:mainexpandintr}. Lemma~\ref{l:powrec} shows how
recurrent and totally returning points are related.

\begin{lem}\label{l:powrec}
If $f:X\to X$ is a continuous map of $X\in \U$ and $x$ is a
recurrent point of $f$ then $x$ is totally returning.
\end{lem}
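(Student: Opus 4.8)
The plan is to reduce the statement for an arbitrary power $f^N$ to a statement about the given map $f$ by exploiting Theorem~\ref{t:recpow} (Gottschalk--Erd\"os--Stone), which tells us that the recurrent points of $f$ and of $f^N$ coincide. Thus it suffices to prove: if $x$ is a recurrent point of a continuous map $f$ on $X\in\U$, then for every $y\ne x$ there exists $n>0$ with $x$ and $f^n(x)$ in the same path component of $X\sm\{y\}$; applying this to each power $f^N$ (whose recurrent points include $x$) then yields that $x$ totally returns. So the whole lemma collapses to the single-power ``returning'' statement.

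To prove that single statement, fix $y\ne x$ and let $A$ be the path component of $X\sm\{y\}$ containing $x$. By Lemma~\ref{l:cutarcut}, since $X$ is a generalized dendrite, $A$ is an \emph{open} set (path components of $X\sm\{y\}$ are open). Because $x$ is recurrent, $f(x)$ visits every neighborhood of $x$; more precisely $x\in\omega_f(x)$, so the orbit $\orb_f(x)$ returns to the open neighborhood $A$ of $x$ infinitely often. Hence there is some $n>0$ with $f^n(x)\in A$, i.e. $f^n(x)$ lies in the same path component of $X\sm\{y\}$ as $x$. That is exactly the definition of $x$ returning to path components under $f$.

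Running this argument with $f^N$ in place of $f$ for each $N\ge 1$: by Theorem~\ref{t:recpow}, $x$ is a recurrent point of $f^N$ as well, so the same reasoning (using that $A$ is open, which depends only on $X$, not on the map) gives an $n>0$ with $(f^N)^n(x)=f^{Nn}(x)$ in the same path component of $X\sm\{y\}$ as $x$. Thus $x$ returns to path components under every power $f^N$, i.e. $x$ totally returns, as claimed.

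The only subtle point — and the one worth stating explicitly — is the use of Lemma~\ref{l:cutarcut} to get that the path components of $X\sm\{y\}$ are open; this is where local arcwise connectedness of $X$ (i.e. $X\in\U$ rather than merely uniquely arcwise connected) is genuinely used, and it is precisely what fails for the Warsaw circle of Example~\ref{e:warsaw}. Given that openness, the recurrence of $x$ does the rest immediately, and Theorem~\ref{t:recpow} handles the passage to powers with no extra work. I expect no real obstacle beyond making sure the continuity hypothesis on $f$ (needed to invoke Theorem~\ref{t:recpow} in the form stated for continuous maps of Hausdorff spaces) is in force, which it is here since $f:X\to X$ is assumed continuous.
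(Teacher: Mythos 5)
Your proposal is correct and follows essentially the same route as the paper: fix $y\ne x$, use that the path component $A$ of $X\sm\{y\}$ containing $x$ contains an open neighborhood of $x$ (local arcwise connectedness, i.e. Lemma~\ref{l:cutarcut}), and then apply Theorem~\ref{t:recpow} to get, for each $N$, some $n$ with $f^{Nn}(x)\in A$. The paper's proof is just a compressed version of this same argument, so there is nothing further to add.
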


\begin{proof}
Choose $y\ne x$ and denote by $A$ the component of $X\sm \{y\}$
containing $x$. Choose a small neighborhood $B$ of $x$ so that
$B\subset A$. Finally, suppose that a positive integer $N$ is given.
Since $y$ is recurrent, then by Theorem~\ref{t:recpow} there exists
$n$ such that $f^{Nn}(x)\in B\subset A$ as desired. 
\end{proof}

\noindent\emph{Proof of Theorem~\ref{t:mainexpandintr}}.\, First
observe that continuous maps are continuous on arcs. This and
Lemma~\ref{l:powrec} imply that Theorem~\ref{t:mainexpand} holds in
our setting. By Lemma~\ref{l:cutarcut},
Theorem~\ref{t:mainexpand}(1) implies
Theorem~\ref{t:mainexpandintr}(1). Clearly,
Theorem~\ref{t:mainexpand}(2) and continuity of $f$ imply
Theorem~\ref{t:mainexpandintr}(2). To prove
Theorem~\ref{t:mainexpandintr}(3) we need to show that the weak
adding machine $C_\infty$ from Theorem~\ref{t:mainexpand}(3) is
topological. Suppose that $C_\infty$ is generated by cycles of sets
$C_i, i=0, 1, \dots$ of periods $N_i, i=0, 1, \dots$. By
Lemma~\ref{l:addma} it suffices to show that sets from cycles of
sets $C_i$ are open in $C_i$ in relative topology for any $i$.
However this follows from Lemma~\ref{l:cutarcut}. Finally,
Theorem~\ref{t:mainexpandintr}(4) immediately follows from
Theorem~\ref{t:mainexpand}(4). \hfill\qed


In conclusion observe that a generalized dendrite $X$ admits a
canonical \emph{ray completion} $\widehat X$. A sketch of the
construction follows. Consider all rays in $X$ which do not converge
at infinity. Two such rays $R_1, R_2$ have either coinciding (from
some moment on), or disjoint (from some moment on) tails. In the
former case we consider them equivalent. To each class of
equivalence we associate a point of $\widehat X$ called a
\emph{point at infinity}. Define $\widehat X$ as the union of $X$
and the just defined points at infinity; as neighborhoods of those
points we take components $C$ of sets $X\sm \{b\}$ where $b$ is a
point of $X$ united with all points at infinity defined by rays
contained in $C$. It is easy to see that the space $\widehat X$ is a
ray complete generalized dendrite.

A pointwise-recurrent continuous map $f:X\to X$ can be extended to a
point\-wise-recurrent continuous map $\hat f:\widehat X\to \widehat
X$ of the ray completion $\widehat X$ of $X$. Then $f:X\to X$ can be
viewed as a result of removing from $\widehat X$ of a few backward
orbits of endpoints of $X$. It is not necessarily so that removed
points belong to topological adding machines; some removed points my
be periodic. Removing a periodic endpoint creates a ray in $X$ which
does not converge at infinity and is such that its tail consists of
points of the same period. The space $\widehat X$ may be a dendrite
or even a tree.

\bibliographystyle{amsalpha}

\end{document}